\newcommand{\biggg}{\bBigg@{2.75}}
\newcommand{\Biggg}{\bBigg@{3}}
\newcommand{\bigggg}{\bBigg@{3.25}}
\newcommand{\Bigggg}{\bBigg@{3.5}}
\newcommand{\biggggg}{\bBigg@{3.75}}
\newcommand{\Biggggg}{\bBigg@{4}}
\newtheoremstyle{mystyle}
  {}
  {}
  {\itshape}
  {}
  {\bfseries}
  {.}
  { }
  {\thmname{#1}\thmnumber{ #2}\thmnote{ (#3)}} 
\theoremstyle{mystyle}
\newtheorem{thm}{Theorem}[section]
\newtheorem{theorem}[thm]{Theorem}
\newtheorem{lemma}[thm]{Lemma}
\newtheorem{entry}[thm]{Entry}
\newcommand{\leqnomode}{\tagsleft@true\let\veqno\@@leqno}
\newcommand{\reqnomode}{\tagsleft@false\let\veqno\@@eqno}
\begin{document}
\setcounter{page}{1}

\title{The three missing terms in \linebreak Ramanujan's septic theta function identity}
\author{Örs Rebák}
\address{Department of Mathematics and Statistics, University of Tromsø -- The Arctic University of Norway, 9037 Tromsø, Norway}
\email{ors.rebak@uit.no}

\begin{abstract}
On page 206 in his lost notebook, Ramanujan recorded the following enigmatic identity for his theta function $\varphi(q)$:
\begin{equation*}
\varphi(e^{-7\pi\sqrt{7}}) = 7^{-3/4}\varphi(e^{-\pi\sqrt{7}})\big\{1 + (\quad)^{2/7} + (\quad)^{2/7} + (\quad)^{2/7}\big\}.
\end{equation*}
We give the three missing terms. In addition, we calculate the class invariant $G_{343}$ and further special values of $\varphi(e^{-n\pi}),$ for $n = 7, 21, 35,$ and $49$.
\end{abstract}

\keywords{theta functions, septic theta function identities, Ramanujan's lost notebook}
\subjclass[2010]{33C05, 05A30, 11F32, 11R29}
\date{\today}

\maketitle

\section{Introduction}

Ramanujan's general theta function $f(a,b)$ is defined by \cite[p.~197]{RamanujanEarlierII}, \cite[p.~34]{BerndtIII}
\begin{equation}\label{def:generaltheta}
f(a,b) := \sum_{n = -\infty}^\infty a^{n(n+1)/2}b^{n(n-1)/2}, \qquad |ab| < 1,
\end{equation}
which provides an alternative formulation \cite[p.~3]{BerndtClassicalToModern} for the classical theta functions $(\theta_i(z,q))_{i=1}^4$ in \cite[pp.~462--465]{WhittakerWatson}. The symmetry reflected in the definition of $f(a,b)$ is inherited by its representation by the Jacobi triple product identity \cite[pp.~176--183]{Jacobi}, \cite[p.~197]{RamanujanEarlierII}, \cite[p.~35]{BerndtIII}, which states that
\begin{equation}\label{Jacobi-triple-product}
f(a,b) = (-a; ab)_\infty (-b; ab)_\infty (ab; ab)_\infty, \qquad  |ab| < 1,
\end{equation}
where
\begin{align*}
(a; q)_\infty := \prod_{k = 0}^\infty (1 - aq^k), \qquad |q| < 1.
\end{align*}
In Ramanujan's notation, the theta function $\varphi(q)$ is defined by \cite[p.~197]{RamanujanEarlierII}, \cite[p.~36]{BerndtIII}
\begin{equation}\label{def:phi}
\varphi(q) := f(q,q) = \sum_{n = -\infty}^\infty q^{n^2} = (-q; q^2)_\infty^2 (q^2; q^2)_\infty, \qquad |q| < 1,
\end{equation}
where the series and product representations are straightforward from \eqref{def:generaltheta} and \eqref{Jacobi-triple-product}, respectively. Furthermore, we set \cite[p.~197]{RamanujanEarlierII}, \cite[p.~37]{BerndtIII}
\begin{equation}\label{def:chi}
\chi(q) := (-q; q^2)_\infty, \qquad |q| < 1.
\end{equation}

If $q = \exp(-\pi\sqrt{n})$, for a positive rational number $n$, the class invariant $G_n$ of Ramanujan \cite{RamanujanModularPi}, \cite[pp.~23--39]{RamanujanCollected}, \cite[p.~183]{BerndtV} and Weber \cite[p.~114]{Weber} is defined by
\begin{equation}\label{def:G}
G_n := 2^{-1/4}q^{-1/24}\chi(q).
\end{equation}
For odd $n$, Ramanujan's values for $G_n$ are listed in \cite[pp.~189--199]{BerndtV}, Weber's list is in \cite[pp.~721--726]{Weber}, and motivation is in \cite{BerndtChanZhang2}. The class invariants are algebraic \cite[pp.~214, 257]{Cox}. 

A fundamental result in the theory of elliptic functions is that for a positive rational $n$,
\begin{equation}\label{elliptic-fundamental}
\varphi^2(e^{-\pi\sqrt{n}}) = {_2}F{_1}\big(\tfrac12, \tfrac12; 1; k_n^2\big) = \frac{2}{\pi}K(k_n).
\end{equation}
Here, ${_2}F{_1}$ is the ordinary or Gaussian hypergeometric function \cite[pp.~24, 281]{WhittakerWatson}, $K$ is the complete elliptic integral of the first kind \cite[pp.~499--500]{WhittakerWatson}, \cite[p.~102]{BerndtIII}, \cite{BerndtChan}, and $k_n$ is a singular value or singular modulus  \cite[pp.~525--527]{WhittakerWatson}, \cite{JoyceZucker}, \cite{BorweinZucker}, \cite{BerndtChanZhang2}, \cite[p.~183]{BerndtV} of the elliptic integral~$K$. The singular values are algebraic \cite{Abel}. Ramanujan used the notation $\alpha_n := k_n^2$ \cite{BerndtChanZhang2}, \cite[p.~183]{BerndtV}. This statement is given more generally in \cite[p.~207]{RamanujanEarlierII}, \cite[p.~101, Entry~6]{BerndtIII}. An overview of the theory of elliptic functions can be found in \cite[p.~102]{BerndtIII}, \cite{BerndtChan}, \cite{BerndtChanZhang2}, \cite[pp.~323--324]{BerndtV}.

For a positive rational $n$, a positive integer $d$, and $q = \exp(-\pi\sqrt{n})$, in the theory of modular equations, the multiplier $m$ of degree $d$ is defined by
\begin{equation}\label{def:multiplier}
m :=  \frac{\varphi^2(q)}{\varphi^2(q^d)} = \frac{\varphi^2(e^{-\pi\sqrt{n}})}{\varphi^2(e^{-d\pi\sqrt{n}})}.
\end{equation}
The multiplier $m$ can be defined more generally, as given in \cite[p.~230]{BerndtIII}, \cite{BerndtChan}, \cite{BerndtChanZhang2}, \cite[p.~324]{BerndtV}. In our case, $m$ defined in \eqref{def:multiplier} is algebraic \cite{Zucker}. An overview of the theory of modular equations can be found in \cite[pp.~213--214]{BerndtIII}, \cite{BerndtChan}, \cite{BerndtChanZhang2}, \cite[p.~185]{BerndtV}.

It is classical \cite[pp. 524--525]{WhittakerWatson}, and it was also discovered by Ramanujan \cite[p.~207]{RamanujanEarlierII},\linebreak \cite[p.~103]{BerndtIII}, \cite[p.~248]{RamanujanEarlierI}, \cite{BerndtChan}, \cite[p.~325]{BerndtV}, that
\begin{equation}\label{e-pi}
\varphi(e^{-\pi}) = \frac{\pi^{1/4}}{\Gamma\big(\frac{3}{4}\big)} = \frac{\Gamma\big(\frac{1}{4}\big)}{\sqrt{2} \pi^{3/4}}.
\end{equation}
For a positive rational $n$, Ramanujan recorded his values for $\varphi(e^{-n\pi})$ in terms of $\varphi(e^{-\pi})$, but in view of \eqref{e-pi}, $\varphi(e^{-n\pi})$ is therefore determined explicitly. At scattered places in his notebooks, Ramanujan recorded some values of $\varphi(e^{-n\pi})$ when $n$ is a power of two, namely, for $n = 1, 2, 4, 1/2, 1/4$ \cite[p.~248]{RamanujanEarlierI}, \cite[p.~325]{BerndtV}; and when $n \geq 3$ is an odd integer, namely, for $n = 3, 5, 7, 9,$ and $45$ \cite[pp.~284, 285, 297, 287, 312]{RamanujanEarlierI}, \cite{BerndtChan}, \cite[pp.~327--337]{BerndtV}. The values\linebreak at powers of two are parts of more general results from Ramanujan's second notebook\linebreak \cite[p.~210]{RamanujanEarlierII}, \cite[pp.~122--123]{BerndtIII}. The evaluations for the odd values were established by Berndt and Chan \cite{BerndtChan}, \cite[pp.~327--337]{BerndtV}. They also determined the values for $n = 13, 27,$ and $63$.

Selberg and Chowla \cite{SelbergChowla} showed that for any singular value $k_n$, the elliptic integral $K(k_n$) is expressible in terms of gamma functions. J.~M.~Borwein and Zucker \cite{Zucker}, \cite{BorweinZucker}, \cite[p.~298]{BorweinBrothersPiAGM} evaluated $K(k_n)$, for $n = 1, \dots, 16$. Thus, by \eqref{elliptic-fundamental}, we have the value of $\varphi(e^{-\pi\sqrt{n}})$ in these cases. We give two theta function values, corresponding to $k_3$ and $k_7$ \cite{JoyceZucker}, \cite{ZuckerJoyceII}, respectively:
\begin{equation}\label{e-sqrt3}
\varphi(e^{-\pi\sqrt{3}}) = \frac{3^{1/8}\Gamma^{3/2}\big(\frac{1}{3}\big)}{2^{2/3}\pi}
\end{equation}
and
\begin{align}\label{e-sqrt7}
\varphi(e^{-\pi\sqrt{7}}) &= \frac{\big\{\Gamma\big(\frac{1}{7}\big)\Gamma\big(\frac{2}{7}\big)\Gamma\big(\frac{4}{7}\big)\big\}^{1/2}}{\sqrt{2} \cdot 7^{1/8}\pi} = \frac{\sqrt{2}\,\big\{\big(\cos\big(\frac{\pi}{7}\big) - \cos\big(\frac{3\pi}{7}\big)\big)B\big(\frac{1}{7}, \frac{2}{7}\big)\big\}^{1/2}}{7^{3/8}\sqrt{\pi}},
\end{align}
where $B(x,y) \mathop{:=} \Gamma(x)\Gamma(y)/\Gamma(x + y)$, for $\operatorname{Re}(x),\operatorname{Re}(y) \mathop{>} 0$, is the beta function \cite[pp.~253--256]{WhittakerWatson}.

If we would like to calculate $\varphi(e^{-\pi\sqrt{r}})$ explicitly, for a positive integer $r$, then if $r$ is square-free and the corresponding values $k_r$ and $K(k_r)$ are known, we can use \eqref{elliptic-fundamental}. If $r$ is not square-free and the value $\varphi(e^{-\pi\sqrt{n}})$ is known, where $n$ is the square-free part of $r$ and $\sqrt{r} = d\sqrt{n}$, for a positive integer $d$, then we can calculate $\varphi(e^{-d\pi\sqrt{n}})$, with appropriate modular equations of degree $d$, which contains the class invariants $G_{n}$ and $G_{d^2n}$, with known explicit values, and the multiplier $m$. There are other particular methods, as we see next, in Entry~\ref{entry:Ramanujan}.

On page 206 in his lost notebook \cite{RamanujanLost}, Ramanujan recorded the following identities.
\begin{entry}[p. 206]\label{entry:Ramanujan}
\leqnomode
Let
\begin{equation}\label{1+u+v+w}
\frac{\varphi(q^{1/7})}{\varphi(q^7)} = 1 + u + v + w.\tag{i}
\end{equation}
Then
\begin{equation}\label{def:p}
p := uvw = \frac{8q^2 (-q; q^2)_\infty}{(-q^7; q^{14})^7_\infty}\tag{ii}
\end{equation}
and
\begin{align}\label{phi-to-p}
\frac{\varphi^8(q)}{\varphi^8(q^7)} - (2 + 5p)\frac{\varphi^4(q)}{\varphi^4(q^7)} + (1 - p)^3 = 0.\tag{iii}
\end{align}
Furthermore,
\begin{equation}\label{u,v,w}
u = \bigg(\frac{\alpha^2 p}{\beta}\bigg)^{1/7}, \quad
v = \bigg(\frac{\beta^2 p}{\gamma}\bigg)^{1/7}, \text{\quad and \quad}
w = \bigg(\frac{\gamma^2 p}{\alpha}\bigg)^{1/7},\tag{iv}
\end{equation}
where $\alpha, \beta,$ and $\gamma$ are the roots of the equation
\begin{equation}\label{r}
r(\xi) := \xi^3 + 2\xi^2\bigg(1 + 3p - \frac{\varphi^4(q)}{\varphi^4(q^7)}\bigg) + \xi p^2(p+4) - p^4 = 0.\tag{v}
\end{equation}
For example,
\begin{equation}\label{enigmatic}
\varphi(e^{-7\pi\sqrt{7}}) = 7^{-3/4}\varphi(e^{-\pi\sqrt{7}})\Big\{1 + (\quad)^{2/7} + (\quad)^{2/7} + (\quad)^{2/7}\Big\}.\tag{vi}
\end{equation}
\reqnomode
\end{entry}

We remark that \eqref{1+u+v+w}--\eqref{r} hold for $|q| < 1$, with $q \neq 0$ in \eqref{u,v,w}. If $q = 0$, then $u = v = w = 0$. Parts \eqref{1+u+v+w}--\eqref{r} were proved by Son \cite{Son}, \cite[pp.~180--194]{AndrewsBerndtII}, \cite[pp.~198--200]{Son2}.

Part \eqref{1+u+v+w} is recorded in Ramanujan's second notebook \cite[p.~239]{RamanujanEarlierII}, \cite[p.~303]{BerndtIII} as well, in the form of
\begin{equation*}
\varphi(q^{1/7}) - \varphi(q^7) = 2q^{1/7}f(q^5, q^9) + 2q^{4/7}f(q^3, q^{11}) + 2q^{9/7}f(q, q^{13}),
\end{equation*}
from where the values of $u, v,$ and $w$ can be determined \cite{Son}, \cite[p.~181]{AndrewsBerndtII}, \cite[p.~198]{Son2} as
\begin{equation}\label{def:u,v,w}
u:=2q^{1/7}\frac{f(q^5, q^9)}{\varphi(q^7)}, \qquad
v:=2q^{4/7}\frac{f(q^3, q^{11})}{\varphi(q^7)}, \qquad
w:=2q^{9/7}\frac{f(q, q^{13})}{\varphi(q^7)},
\end{equation}
since they are not clearly defined in \eqref{u,v,w}. For \eqref{1+u+v+w} to hold, we could give $u, v,$ and $w$ in any arbitrary order, but throughout the paper, we use the definitions in \eqref{def:u,v,w}.

In \eqref{enigmatic}, Ramanujan gave an enigmatic identity, as a fragmentary example, where on the right-hand side there are three missing terms. Note that Ramanujan used the exponent $2/7$, instead of $1/7$, as he should have according to \eqref{u,v,w}. It turns out that this is correct, so it is likely that Ramanujan knew something about the structure of the terms. Ramanujan wrote $7^{3/4}$ instead of $7^{-3/4}$ on the right-hand side. We have corrected this.

Berndt \cite{BerndtForty}, Son \cite{Son}, and Andrews--Berndt \cite[p.~181]{AndrewsBerndtII} leave the problem of the three missing terms open. They wonder why Ramanujan did not record the terms in \eqref{enigmatic}. We cannot answer this question, but Ramanujan gave us the procedure in \eqref{1+u+v+w}--\eqref{r} that helps us to find them. The equations in Entry~\ref{entry:Ramanujan}\eqref{1+u+v+w}--\eqref{r} can be interpreted as the following:
\begin{enumerate}
\item[\eqref{1+u+v+w}] Our aim is to find the values of $u, v,$ and $w$ for a given $|q| < 1$.
\item[\eqref{def:p}] Calculate $p$.
\item[\eqref{phi-to-p}] Solve the quadratic equation for $\varphi^4(q)/\varphi^4(q^7)$ and choose the \emph{correct root}.
\item[\eqref{r}] By solving the cubic equation $r(\xi) = 0$, find $\alpha, \beta,$ and $\gamma$.
\item[\eqref{u,v,w}] By using $\alpha, \beta,$ and $\gamma$ in the \emph{correct order}, construct $u, v,$ and $w$.
\end{enumerate}

Before we take these steps, we give some preliminaries in Sections~\ref{section:preliminaries} and \ref{section:order}. Ramanujan gave us no hints on which is the correct choice for $\varphi^4(q)/\varphi^4(q^7)$ in \eqref{phi-to-p}, and how to find the correct order of the roots of $r$ in \eqref{r}. Possibly, he stopped after solving \eqref{r}, since the exponents $2/7$ on the right-hand side of \eqref{enigmatic} become apparent after one finds a proper representation for the roots of $r$, but before their correct order is determined. This part needs most of our preparation; thus we give lemmas on the correct order of $\alpha, \beta,$ and $\gamma$ in Section~\ref{section:order}. Our main result is in Section~\ref{section:three-missing-terms}, where we give the three missing terms of \eqref{enigmatic}. In Section~\ref{section:G343}, we give a closed-form expression for the class invariant $G_{343}$.  In Section~\ref{section:examples-1}, we calculate the special values of $\varphi(e^{-n\pi}),$ for $n = 7, 21,$ and $35$, and the value of $\varphi(e^{-7\pi\sqrt{3}})$. In Section~\ref{section:examples-2}, we conclude our article with the value of $\varphi(e^{-49\pi})$, given as a second example of Ramanujan's type for \eqref{1+u+v+w}.

\section{Preliminaries}\label{section:preliminaries}

We recall the transformation formula for $\varphi(e^{-\pi\sqrt{n}})$.

\begin{lemma}\label{lemma:transform} If $n$ is a positive rational number, then
\begin{equation*}
\varphi(e^{-\pi/\sqrt{n}}) = n^{1/4} \varphi(e^{-\pi\sqrt{n}}).
\end{equation*}
\end{lemma}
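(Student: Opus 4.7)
The plan is to derive this from the classical Jacobi imaginary transformation for the theta function $\vartheta_3$, which in Ramanujan's notation amounts to the symmetry of $\varphi$ under $q \mapsto q^{\ast}$ where the associated parameter is inverted. Writing $q = e^{-\pi t}$ with $t > 0$, set
\begin{equation*}
\psi(t) := \varphi(e^{-\pi t}) = \sum_{k = -\infty}^\infty e^{-\pi k^2 t}.
\end{equation*}
The goal reduces to establishing the functional equation $\psi(1/t) = \sqrt{t}\, \psi(t)$; substituting $t = \sqrt{n}$ then gives $\varphi(e^{-\pi/\sqrt{n}}) = \psi(1/\sqrt{n}) = n^{1/4} \psi(\sqrt{n}) = n^{1/4}\varphi(e^{-\pi\sqrt{n}})$.

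To prove $\psi(1/t) = \sqrt{t}\,\psi(t)$, I would apply the Poisson summation formula to the Gaussian $f(x) = e^{-\pi x^2 t}$. A standard contour-shift / completing-the-square computation gives its Fourier transform $\widehat{f}(\xi) = t^{-1/2} e^{-\pi \xi^2/t}$, and then Poisson summation $\sum_{k \in \Z} f(k) = \sum_{k \in \Z} \widehat{f}(k)$ yields
\begin{equation*}
\sum_{k = -\infty}^\infty e^{-\pi k^2 t} = \frac{1}{\sqrt{t}} \sum_{k = -\infty}^\infty e^{-\pi k^2/t},
\end{equation*}
which is precisely $\psi(t) = t^{-1/2}\psi(1/t)$, equivalently $\psi(1/t) = \sqrt{t}\,\psi(t)$.

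Alternatively, since this transformation formula is thoroughly classical, one could simply cite it from \cite{WhittakerWatson} (the theta transformation on pp.~474--476 there translates directly via $\varphi(q) = \theta_3(0,q)$) or from \cite[p.~43, Entry~27(i)]{BerndtIII}, and invoke it with the substitution $t = \sqrt{n}$. There is no real obstacle: the only point of care is the choice of branch $\sqrt{t} > 0$ for $t > 0$, which matches the positive real value $n^{1/4}$ on the right-hand side, and the observation that the identity extends from positive integers to positive rationals $n$ without change since $t = \sqrt{n}$ is still a positive real number.
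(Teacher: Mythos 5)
Your proposal is correct and matches the paper's proof in substance: the paper simply invokes the transformation formula $\sqrt{a}\,\varphi(e^{-a^2}) = \sqrt{b}\,\varphi(e^{-b^2})$ for $ab = \pi$ (citing Ramanujan's notebooks and \cite[p.~43]{BerndtIII}) and specializes to $a^2 = \pi/\sqrt{n}$, which is exactly the functional equation $\psi(1/t) = \sqrt{t}\,\psi(t)$ you state and then apply at $t = \sqrt{n}$. The only difference is that you additionally sketch the standard Poisson-summation derivation of that formula, which the paper leaves to the citation.
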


\begin{proof} The transformation formula for $\varphi(q)$ states that \cite[p.~199]{RamanujanEarlierII}, \cite[p.~43]{BerndtIII} if $a, b > 0$ with $ab = \pi$, then
\begin{equation*}
\sqrt{a} \varphi\big(e^{-a^2}\big) = \sqrt{b} \varphi\big(e^{-b^2}\big).
\end{equation*}
The lemma is the special case for $a^2 = \pi/\sqrt{n}$.
\end{proof}

Ramanujan gave some properties of $G_n$. We need the following.

\begin{lemma}\label{lemma:G} If $n$ is a positive rational number, then $G_n = G_{1/n}$.
\end{lemma}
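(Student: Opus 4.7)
The plan is to reduce the identity $G_n = G_{1/n}$ to the classical complementary-modulus relation for singular values of the elliptic integral $K$.

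The key ingredient is Weber's formula $G_n = (2 k_n k_n')^{-1/12}$, with $k_n' := \sqrt{1 - k_n^2}$, equivalently
\[
G_n^{24} = \frac{1}{4\,\alpha_n(1 - \alpha_n)},
\]
where $\alpha_n = k_n^2$; this is a standard identity recorded in \cite[p.~185]{BerndtV}. The second ingredient is that $k_n$ is, by definition, a singular value specified by $K(k_n')/K(k_n) = \sqrt{n}$, from which one immediately reads off $k_{1/n} = k_n'$, so that $\alpha_{1/n} = 1 - \alpha_n$. Combining these gives $G_{1/n}^{24} = \{4\,\alpha_{1/n}(1 - \alpha_{1/n})\}^{-1} = \{4(1 - \alpha_n)\alpha_n\}^{-1} = G_n^{24}$, and since both $G_n$ and $G_{1/n}$ are positive, taking $24$th roots yields $G_n = G_{1/n}$.

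There is essentially no obstacle in this approach, as each step is a direct application of a standard identity; the only ``work'' is invoking the two identities above. As an alternative, one could bypass singular moduli entirely by writing
\[
\chi(q) = \frac{(q^2;q^2)_\infty^2}{(q;q)_\infty\,(q^4;q^4)_\infty}
\]
and recognising the associated ratio of Dedekind eta functions: with $q = e^{2\pi i\tau}$, $q^{-1/24}\chi(q) = \eta(2\tau)^2/\{\eta(\tau)\eta(4\tau)\}$. The substitution $n \mapsto 1/n$ corresponds to $\tau \mapsto -1/(4\tau)$, and applying $\eta(-1/\sigma) = \sqrt{-i\sigma}\,\eta(\sigma)$ to each of $\eta(2\tau^*)$, $\eta(\tau^*)$, $\eta(4\tau^*)$ produces a numerator factor $-2i\tau$ from $(\sqrt{-2i\tau})^2$ and a denominator factor $\sqrt{-4i\tau}\cdot\sqrt{-i\tau} = -2i\tau$, which cancel exactly. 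In this second route the entire work is the bookkeeping of these three square-root factors; the result $G_n = G_{1/n}$ then follows after multiplication by $2^{-1/4}$.
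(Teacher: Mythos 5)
Your proposal is correct, but note that the paper does not actually prove this lemma at all: its ``proof'' is a pointer to Ramanujan's paper and to Yi's thesis, so any comparison is really between your argument and the standard literature. Your first route --- combining $G_n = \{4\alpha_n(1-\alpha_n)\}^{-1/24}$ with the complementary-modulus observation $k_{1/n} = k_n'$, which follows from $K(k_n')/K(k_n) = \sqrt{n}$ and the uniqueness of the singular modulus --- is the classical argument and is essentially what the cited sources contain; it is clean provided you accept the catalogued identity $\chi(q) = 2^{1/6}\{\alpha(1-\alpha)/q\}^{-1/24}$ as known. Your second route via the eta quotient $q^{-1/24}\chi(q) = \eta(2\tau)^2/\{\eta(\tau)\eta(4\tau)\}$ with $\tau = i\sqrt{n}/2$ is also correct: the map $n \mapsto 1/n$ is indeed $\tau \mapsto -1/(4\tau)$, and the three applications of $\eta(-1/\sigma) = \sqrt{-i\sigma}\,\eta(\sigma)$ produce a factor $-2i\tau$ in both numerator and denominator, which cancel. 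The eta-quotient proof has the advantage of being self-contained modulo the single transformation law for $\eta$ (itself equivalent to the $\varphi$-transformation the paper already invokes in Lemma~2.1), whereas the singular-modulus proof leans on the parametrization of $\chi$ in terms of $\alpha$; either would serve as an actual proof where the paper supplies only a citation.
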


\begin{proof}
See Ramanujan's paper \cite{RamanujanModularPi}, \cite[pp.~23--39]{RamanujanCollected} or Yi's thesis \cite[pp.~18--19]{YiThesis}.
\end{proof}

Our next lemma helps us to find $p$ in Entry~\ref{entry:Ramanujan}\eqref{def:p}. It gives a connection between $p$~and~$G_n$.

\begin{lemma}\label{lemma:p} If $q = \exp(-\pi\sqrt{n})$, for a positive rational number $n$, then
\begin{equation*}
p = \frac{2\sqrt{2} G_n}{G_{49n}^7}.
\end{equation*}
\end{lemma}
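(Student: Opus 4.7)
The plan is a direct computation: substitute the formula $\chi(q) = 2^{1/4} q^{1/24} G_n$ (obtained by rearranging the definition \eqref{def:G}) into the definition \eqref{def:p} of $p$, and simplify. The key observation is that when $q = \exp(-\pi\sqrt{n})$, we have $q^7 = \exp(-\pi\sqrt{49n})$, so $(-q^7;q^{14})_\infty = \chi(q^7)$ is the value of $\chi$ associated to the class invariant $G_{49n}$, namely $\chi(q^7) = 2^{1/4} q^{7/24} G_{49n}$.

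Carrying out the substitution, the numerator of $p$ becomes
\begin{equation*}
8q^2 \chi(q) = 8 q^2 \cdot 2^{1/4} q^{1/24} G_n,
\end{equation*}
while the denominator becomes
\begin{equation*}
\chi(q^7)^7 = \bigl(2^{1/4} q^{7/24} G_{49n}\bigr)^{7} = 2^{7/4} q^{49/24} G_{49n}^{7}.
\end{equation*}

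All that remains is bookkeeping. The constants contribute $8 \cdot 2^{1/4}/2^{7/4} = 2^{3} \cdot 2^{-3/2} = 2\sqrt{2}$. The powers of $q$ contribute $q^{2 + 1/24 - 49/24} = q^{48/24 + 1/24 - 49/24} = q^0 = 1$, so the $q$-dependence cancels exactly, leaving $p = 2\sqrt{2}\, G_n/G_{49n}^{7}$, as claimed.

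There is no real obstacle here; the lemma is essentially a dictionary entry converting the product-form of $p$ into the class-invariant language. The only thing one needs to be mindful of is that the exponent $1/24$ in \eqref{def:G} and the exponent $2$ on $q$ in \eqref{def:p} are calibrated precisely so that the $q$-factors telescope to $1$, which is what makes this reformulation useful in later sections.
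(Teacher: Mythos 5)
Your proposal is correct and is essentially identical to the paper's proof: both recognize $(-q;q^2)_\infty = \chi(q)$ and $(-q^7;q^{14})_\infty = \chi(q^7)$, apply the definition of the class invariant at $q$ and $q^7 = \exp(-\pi\sqrt{49n})$, and check that the powers of $2$ and $q$ combine to $2\sqrt{2}$ and $1$ respectively. The paper merely compresses this bookkeeping into a single displayed chain of equalities.
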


\begin{proof}
From Entry~\ref{entry:Ramanujan}\eqref{def:p}, \eqref{def:chi}, and \eqref{def:G}, we have
\begin{equation*}
p = \frac{8q^2 (-q; q^2)_\infty}{(-q^7; q^{14})^7_\infty} = \frac{8q^2\chi(q)}{\chi^7(q^7)} = \frac{2\sqrt{2} G_n}{G_{49n}^7}.\tag*{\qedhere}
\end{equation*}
\end{proof}

The Chebyshev polynomial $U_n$ of the second kind \cite[pp.~3--4]{MasonHandscomb} is defined for $\left|\cos\theta\right| \leq 1$ by
\begin{equation}\label{def:U}
U_n(\cos\theta) := \frac{\sin((n+1)\theta)}{\sin\theta}, \qquad n = 0, 1, \dots.
\end{equation}
The polynomial $U_n$ satisfies the recurrence relation $U_0(x) = 1, U_1(x) = 2x$,
\begin{equation*}
U_n(x) = 2xU_{n-1}(x) - U_{n-2}(x), \qquad n = 2, 3, \dots,
\end{equation*}
which extends the definition to all complex values $x$. Thus, $U_n$ is a polynomial of degree $n$, with real, distinct roots, which are symmetric about zero. The first few cases are listed in \cite[p.~994]{GradshteynRyzhik7}.

\begin{lemma}\label{lemma:roots} The roots of $U_n$ are
\begin{equation*}
x_k = \cos\frac{k\pi}{n+1}, \qquad k = 1, \dots, n.
\end{equation*}
\end{lemma}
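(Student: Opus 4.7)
The plan is to read off the roots directly from the trigonometric definition \eqref{def:U}, rather than working with the recurrence. Writing $U_n(x) = 0$ with $x = \cos\theta$ for some $\theta \in (0,\pi)$, the definition reduces the equation to $\sin((n+1)\theta) = 0$ subject to $\sin\theta \neq 0$. The first condition forces $(n+1)\theta \in \pi\mathbb{Z}$, i.e.\ $\theta = k\pi/(n+1)$ for some integer $k$; the second excludes $k \equiv 0 \pmod{n+1}$. Restricting to the fundamental range $\theta \in (0,\pi)$, on which cosine is a bijection onto $(-1,1)$, leaves exactly the $n$ values $k = 1, \dots, n$, producing $n$ distinct candidate roots
\[
x_k = \cos\frac{k\pi}{n+1}, \qquad k = 1, \dots, n,
\]
all lying in $(-1,1)$.

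To conclude that these are \emph{all} the roots, I would invoke the fact already stated in the excerpt that $U_n$ is a polynomial of degree $n$ (established from the recurrence), so by the fundamental theorem of algebra it has at most $n$ complex roots; the $n$ distinct real values above must therefore exhaust them.

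There is no real obstacle: the only point requiring a sentence of care is the transition from the trigonometric definition, which is only asserted for $|\cos\theta| \le 1$, to the statement about the roots of the polynomial $U_n$ viewed as a function on all of $\mathbb{C}$. This is handled by the degree-counting argument above, since every root we exhibit already lies inside $[-1,1]$.
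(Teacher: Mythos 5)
Your proof is correct and takes the same route as the paper, which simply states that the lemma ``follows directly from the definition \eqref{def:U}''; you have merely filled in the details (the reduction to $\sin((n+1)\theta)=0$ with $\sin\theta\neq 0$, and the degree count guaranteeing these $n$ distinct values exhaust all roots) that the paper leaves implicit.
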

\begin{proof} This follows directly from the definition \eqref{def:U}.
\end{proof}

\section{The order of the roots}\label{section:order}

In Entry~31 of Chapter~16 of Ramanujan's second notebook \cite[p.~200]{RamanujanEarlierII}, \cite[pp.~48--49]{BerndtIII}, the following general theorem is stated. Let $\mathcal{U}_k = a^{k(k+1)/2}b^{k(k-1)/2}$ and $\mathcal{V}_k = a^{k(k-1)/2}b^{k(k+1)/2}$ for each nonnegative integer $k$. Then, for a positive integer $n$,
\begin{equation}\label{entry31}
f(\mathcal{U}_1, \mathcal{V}_1) =  \sum_{k=0}^{n-1} \mathcal{U}_k f\bigg(\frac{\mathcal{U}_{n+k}}{\mathcal{U}_k},\frac{\mathcal{V}_{n-k}}{\mathcal{U}_k}\bigg), \qquad |ab| < 1, \quad ab \neq 0.
\end{equation}
For a positive integer $n$, and $|q| < 1$, $q \neq 0$, let
\begin{align}\label{def:u_k}
u_k := q^{k^2/n}f(q^{n + 2k}, q^{n - 2k}) = \sum_{m=-\infty}^{\infty} q^{(k-mn)^2/n}, \qquad k = 0, \dots, n - 1,
\end{align}
where the series representation is obtained by \eqref{def:generaltheta}. Note by \eqref{def:phi} that $u_0 = \varphi(q^n)$. By setting $(a, b) = (q^{1/n}, q^{1/n})$ into \eqref{entry31}, we obtain
\begin{equation*}
\varphi(q^{1/n}) = \sum_{k=0}^{n-1} u_k.
\end{equation*}
We consider $f(a,b) = af(a^{-1}, a^2b)$, for $|ab| < 1$ and $a \neq 0$, which is followed by Entry~18(i),(iv) in \cite[p.~197]{RamanujanEarlierII}, \cite[pp.~34--35]{BerndtIII}. Because of this, we have $u_k = u_{n-k},$ for $k = 1,\dots,n-1.$ Since $u_0$ is nonzero, for each odd integer $n$,
\begin{equation*}
\frac{\varphi(q^{1/n})}{u_0} = 1 + \sum_{k=1}^{(n-1)/2} \frac{2u_k}{u_0}.
\end{equation*}
For $n = 7$, we arrive at Entry~\ref{entry:Ramanujan}\eqref{1+u+v+w}, and for $u, v,$ and $w,$ defined in \eqref{def:u,v,w}, we have
\begin{equation}\label{u,v,w-u_k}
u = \frac{2u_1}{u_0}, \qquad v = \frac{2u_2}{u_0}, \text{\quad and \quad}w = \frac{2u_3}{u_0}.
\end{equation}

For finding the three missing terms, it is enough to handle $u, v,$ and $w$ for $0 < q < 1$, in which case these values are real. Thus in this section, we state our lemmas under this condition. 

First, we would like to show that for $0 < q < 1$, the values $u_k$ defined in \eqref{def:u_k} are in descending order, for $k = 0,\dots,\lfloor n/2\rfloor$, where $\lfloor n/2 \rfloor$ is the largest integer $r$, such that $r \leq n/2$. For this purpose, we now overview some classical results.

The third Jacobi theta function $\theta_3(z, q)$ is defined by \cite[pp.~463--464]{WhittakerWatson}
\begin{equation}\label{def:theta_3-q}
\theta_3(z, q) := \sum_{n=-\infty}^{\infty} q^{n^2}e^{2niz}, \qquad z \in \mathbb{C}, \quad |q| < 1,
\end{equation}
or equivalently, $\theta_3(z \mid \tau)$ is defined by
\begin{equation}\label{def:theta_3-tau}
\theta_3(z \mid \tau) := \sum_{n=-\infty}^{\infty} e^{\pi i \tau n^2 + 2niz}, \qquad z \in \mathbb{C}, \quad \operatorname{Im} \tau > 0.
\end{equation}
With $q = e^{\pi i \tau}$, \eqref{def:theta_3-q} and \eqref{def:theta_3-tau} are equal, and $|q| < 1$ if and only if $\operatorname{Im} \tau > 0$. We use both notations, depending on whether we would like to indicate the dependence on $q$ or $\tau$. Using the identity $e^{2niz} + e^{-2niz} = 2\cos(2nz)$, we can rewrite \eqref{def:theta_3-q} as \cite[pp.~463--464]{WhittakerWatson}
\begin{equation}\label{eq:theta_3-trig}
\theta_3(z, q) = 1 + 2\sum_{n=1}^{\infty} q^{n^2}\cos 2nz, \quad z \in \mathbb{C}, \quad |q| < 1.
\end{equation}
A straightforward calculation shows the connection between the third Jacobi theta function and Ramanujan's general theta function. From \eqref{def:theta_3-q} and \eqref{def:generaltheta}, we find that \cite[p.~3]{BerndtIII}
\begin{equation}\label{Ramanujan-Jacobi-connection-q}
\theta_3(z, q) = f(qe^{2iz}, qe^{-2iz}), \qquad z \in \mathbb{C}, \quad |q| < 1, 
\end{equation}
and from \eqref{def:theta_3-tau} and \eqref{def:generaltheta}, we have
\begin{equation}\label{Ramanujan-Jacobi-connection-tau}
\theta_3(z \mid \tau) = f(e^{\pi i \tau + 2iz}, e^{\pi i \tau - 2iz}), \qquad z \in \mathbb{C}, \quad \operatorname{Im} \tau > 0. 
\end{equation}

In the following lemma, we show a monotonicity property of the third Jacobi theta function.

\begin{lemma}\label{lemma:theta_3-monotonicity}
If $m$ is an integer and $0 < q < 1$, then
\begin{enumerate}[label=\emph{(\roman*)}, ref=(\roman*)]
\item if $z \in [m\pi, m\pi + (\pi/2)]$, then $\theta_3(z,q)$ is strictly monotonically decreasing in $z$,\label{theta_3-monotonicity-dec} and
\item if $z \in [m\pi - (\pi/2), m\pi]$, then $\theta_3(z,q)$ is strictly monotonically increasing in $z$.\label{theta_3-monotonicity-inc}
\end{enumerate}
\end{lemma}

\begin{proof} Since $\theta_3(z, q)$ has period $\pi$ in $z$ \cite[p.~463]{WhittakerWatson}, it is enough to show that the statement is true for $m = 0$. Because $\theta_3(z, q)$ is an even function of $z$ \cite[p.~464]{WhittakerWatson}, it is enough to show one of the two cases.

To prove \ref{theta_3-monotonicity-dec}, we consider $\theta_3(z,q)$ for $z \in [0, \pi/2]$ and $0 < q < 1$. The zeros of $\theta_3(z,q)$ are of the form of $z = (k + (1/2))\pi + (\ell + (1/2))\pi\tau$, where $\operatorname{Im} \tau > 0$, for all integer values of $k$ and $\ell$ \cite[pp.~465--466]{WhittakerWatson}, therefore it has no zeros for $z \in [0, \pi/2]$. Since the series for $\theta_3(z,q)$ in  \eqref{def:theta_3-q} is a series of analytic functions, uniformly convergent in any bounded domain of values of $z$ \cite[p.~463]{WhittakerWatson}, $\theta_3(z, q)$ is therefore a continuous function. Furthermore, from \eqref{def:theta_3-q} or \eqref{eq:theta_3-trig}, $\theta_3(z,q)$ is a real-valued function, for that $\theta_3(0, q) = \sum_{n = -\infty}^{\infty} q^{n^2} > 0$, thus by the contrapositive of the intermediate value theorem, we find that $\theta_3(z, q)$ is positive for $z \in [0, \pi/2]$.

From \eqref{Ramanujan-Jacobi-connection-q} and from the Jacobi triple product identity \eqref{Jacobi-triple-product}, we find that \cite[p.~469]{WhittakerWatson}
\begin{align}
\theta_3(z,q) &= f(qe^{2iz}, qe^{-2iz}) = (-qe^{2iz}; q^2)_\infty (-qe^{-2iz}; q^2)_\infty (q^2; q^2)_\infty\notag \\
&= \prod_{n=1}^{\infty} (1 - q^{2n})(1 + 2q^{2n - 1}\cos2z + q^{4n-2}).\label{theta_3-Jacobi-triple-product}
\end{align}
Since the resulting series converges uniformly \cite[pp.~471, 79]{WhittakerWatson}, we may differentiate the logarithm of \eqref{theta_3-Jacobi-triple-product} with respect to $z$. Denoting the first partial derivative of $\theta_3(z, q)$ with respect to $z$ by $\theta'_3(z, q)$, and taking the logarithmic derivative of \eqref{theta_3-Jacobi-triple-product}, we find that \cite[p.~489]{WhittakerWatson}
\begin{equation}\label{theta_3-log}
\frac{\theta'_3(z,q)}{\theta_3(z,q)} = -4 \sum_{n=1}^{\infty} \frac{q^{2n - 1}\sin 2z}{1 + 2q^{2n - 1}\cos 2z + q^{4n - 2}}.
\end{equation}
Now, note that the denominator of the summand on the right-hand side of \eqref{theta_3-log} is positive, since
\begin{equation*}
-\frac{1 + q^{4n - 2}}{2q^{2n - 1}} = -\frac{1}{2}\bigg(q^{2n - 1} + \frac{1}{q^{2n - 1}}\bigg) < -1 \leq \cos 2z, \qquad n = 1,2,\dots.
\end{equation*}
Thus, the sign of the sum is depending only on the sign of $\sin 2z$. Since $\theta_3(z,q) > 0$, and since $\sin 2z = 0$ for $z \in \{0, \pi/2\}$ and $\sin 2z > 0$ for $z \in (0, \pi/2)$, we find that $\theta'_3(z,q) = 0$ for $z \in \{0, \pi/2\}$ and $\theta'_3(z,q) < 0$ for $z \in (0,\pi/2)$. Since $\theta_3(z, q)$ is continuous for $z \in [0, \pi/2]$, we conclude that $\theta_3(z,q)$ is strictly monotonically decreasing for $z \in [0, \pi/2]$.
\end{proof}

Now, we prove the needed monotonicity property of the values $u_k$ defined in \eqref{def:u_k}.

\begin{lemma}\label{lemma:order} If $n$ is a nonnegative integer and $0 < q < 1$, then $u_k$ is positive and strictly monotonically decreasing for $k = 0, \dots, n/2$, when $n$ is even, and for $k = 0, \dots, (n-1)/2$, when $n$ is odd.
\end{lemma}
\begin{proof}
Let $n$ and $q$ be fixed with the given conditions. From \eqref{def:u_k}, we deduce that $u_k$ is positive.

To prove the monotonicity, we rewrite $u_k$ in terms of the third Jacobi theta function. From \eqref{def:u_k}, \eqref{def:generaltheta}, \eqref{def:theta_3-tau}, and \eqref{Ramanujan-Jacobi-connection-tau}, we find that
\begin{equation}\label{u_k-theta_3}
u_k = q^{k^2/n}f(q^{n+2k}, q^{n-2k}) = q^{k^2/n}f(e^{\pi i \tau + 2iz_k}, e^{\pi i \tau - 2iz_k}) = q^{k^2/n}\theta_3(z_k \mid \tau),
\end{equation}
where here and in the rest of the proof $k = 0, \dots, \lfloor n/2 \rfloor$,
\begin{equation}\label{z_k-and-tau}
z_k = -i k \log q,  \quad \text{and} \quad \tau = -\frac{i n \log q}{\pi} = iC^{-1}, \quad \text{with} \quad C := \frac{\pi}{n\left|\log q\right|} > 0.
\end{equation}
Since $\log q < 0$, we have $\operatorname{Im} \tau > 0$. Note that $\tau$ is independent of $k$, and that $e^{\pi i \tau} = e^{-\pi / C} = q^n$.

Now, we apply Jacobi's imaginary transformation formula \cite[pp.~474--476]{WhittakerWatson}, \cite[pp.~140--141]{BerndtIV}
\begin{equation}\label{Jacobi-imaginary}
\theta_3(z_k \mid \tau) = (-i\tau)^{-1/2}\exp\bigg(\frac{z_k^2}{\pi i \tau}\bigg)\theta_3(z_k/\tau \mid -1/\tau),
\end{equation}
where by \eqref{z_k-and-tau},
\begin{align}
(-i\tau)^{-1/2} &= \sqrt{C}\label{const-itau}
\intertext{and}
\exp\bigg(\frac{z_k^2}{\pi i \tau}\bigg) &= q^{-k^2/n}\label{exp-z_k}.
\end{align}
From \eqref{u_k-theta_3}--\eqref{exp-z_k}, we find that
\begin{equation*}
u_k = q^{k^2/n}\theta_3(z_k \mid \tau) = \sqrt{C}\cdot\theta_3(z_k/\tau \mid -1/\tau) = \sqrt{C}\cdot\theta_3(z_k/\tau, e^{-\pi i / \tau}),
\end{equation*}
where $C$ is some positive value independent of $k$,
\begin{align*}
\frac{z_k}{\tau} =\frac{k\pi}{n}, \quad \text{and} \quad -\frac{1}{\tau} = iC.
\end{align*}
Since $e^{-\pi i / \tau} = e^{-\pi C} \in (0,1)$ and $(z_k/\tau) = (k\pi/n)$ is a strictly monotonically increasing sequence in $[0,\pi/2]$, applying Lemma~\ref{lemma:theta_3-monotonicity}\ref{theta_3-monotonicity-dec} with $m = 0$, we complete the proof.
\end{proof}

The following lemma is a corollary of Lemma~\ref{lemma:order} for $u, v,$ and $w$ defined in \eqref{def:u,v,w}.

\begin{lemma}\label{lemma:u,v,w,p-supp} For $0 < q < 1$,
\begin{enumerate}[label=\emph{(\roman*)}, ref=(\roman*)]
\item\label{u,v,w-supp} $2 > u > v > w > 0$,
\item\label{p-supp} $0 < p < 8$.
\end{enumerate}
\end{lemma}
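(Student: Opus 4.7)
The plan is to read off the lemma as a direct consequence of the preceding Lemma~\ref{lemma:order} applied to $n=7$, together with the identifications of $u,v,w$ in \eqref{u,v,w-u_k} and the formula $p=uvw$ from \eqref{def:p}.

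For part \ref{u,v,w-supp}, I would apply Lemma~\ref{lemma:order} with $n=7$ (which satisfies $n>3$ and is odd) to conclude that $u_0>u_1>u_2>u_3>0$ whenever $0<q<1$. Dividing through by $u_0$ and multiplying by $2$ then gives
\begin{equation*}
2>\frac{2u_1}{u_0}>\frac{2u_2}{u_0}>\frac{2u_3}{u_0}>0,
\end{equation*}
which by \eqref{u,v,w-u_k} is exactly $2>u>v>w>0$.

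For part \ref{p-supp}, I would recall from Entry~\ref{entry:Ramanujan}\eqref{def:p} that $p=uvw$. Part \ref{u,v,w-supp} gives $u,v,w\in(0,2)$, so $p=uvw>0$ and the strict inequalities in \ref{u,v,w-supp} yield $p=uvw<2\cdot 2\cdot 2=8$.

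There is no real obstacle here: once Lemma~\ref{lemma:order} is in hand, both parts are immediate from the definitions \eqref{def:u,v,w}, \eqref{u,v,w-u_k}, and \eqref{def:p}. The only care needed is to note that the hypothesis $n>3$ of Lemma~\ref{lemma:order} is met by $n=7$, so that the strict chain of inequalities extends all the way down to $u_3$.
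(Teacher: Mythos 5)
Your proof is correct and follows the same route as the paper: part (i) comes from Lemma~\ref{lemma:order} with $n=7$ via the representation \eqref{u,v,w-u_k}, and part (ii) follows immediately from $p=uvw$ and the bounds $0<u,v,w<2$. The only difference is that you spell out the division by $u_0$ explicitly, which the paper leaves implicit.
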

\begin{proof}
To prove \ref{u,v,w-supp}, we represent $u, v,$ and $w$ as in \eqref{u,v,w-u_k}, and we use Lemma~\ref{lemma:order} with $n = 7$. Part~\ref{p-supp} follows from \ref{u,v,w-supp} with $p = uvw$, as it is defined in Entry~\ref{entry:Ramanujan}\eqref{def:p}.
\end{proof}

We need the following two statements on the values of $u, v,$ and $w$ defined in \eqref{def:u,v,w}. 

\begin{lemma}\label{lemma:Son-supp} For $|q| < 1$,
\leqnomode
\begin{equation}\label{u^3v}
u^3v + v^3w + w^3u = 2\bigg(\frac{\varphi^4(q)}{\varphi^4(q^7)} - 3p - 1\bigg)\tag{i}
\end{equation}
and
\begin{equation}\label{u^7}
u^7 + v^7 + w^7 = \frac{\varphi^8(q)}{\varphi^8(q^7)} - 7(p - 2)\frac{\varphi^4(q)}{\varphi^4(q^7)} + 7p^2 -49p - 15\tag{ii}.
\end{equation}
\reqnomode
\end{lemma}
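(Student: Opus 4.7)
The plan is to identify the cyclic products $u^3v,\, v^3w,\, w^3u$ with the three roots $\alpha,\beta,\gamma$ of the cubic $r$ in Entry~\ref{entry:Ramanujan}\eqref{r} and apply Vieta's formulas for (i), while (ii) turns out to resist this algebraic shortcut and requires the theta-function dissection approach of Son.

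For (i), starting from \eqref{u,v,w} together with the Vieta relation $\alpha\beta\gamma=p^4$ (read off from the constant term of $r$), I compute
\[
u^3 v = \bigl(\alpha^2 p/\beta\bigr)^{3/7}\bigl(\beta^2 p/\gamma\bigr)^{1/7} = \alpha^{6/7}(\beta\gamma)^{-1/7}p^{4/7} = \alpha^{6/7}(\alpha/p^4)^{1/7}p^{4/7} = \alpha,
\]
and cyclically $v^3w=\beta$, $w^3u=\gamma$. Summing and reading off the coefficient of $\xi^2$ in $r$ then gives $u^3v+v^3w+w^3u = \alpha+\beta+\gamma = 2(\varphi^4(q)/\varphi^4(q^7)-3p-1)$, which is \eqref{u^3v}. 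The seventh-root branches appearing here are those implicitly fixed by \eqref{def:u,v,w}; their consistency with \eqref{u,v,w} was established by Son \cite{Son}.

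For (ii), the same identification yields
\[
u^7 + v^7 + w^7 = p\bigl(\alpha^2/\beta+\beta^2/\gamma+\gamma^2/\alpha\bigr) = (\alpha^3\gamma+\beta^3\alpha+\gamma^3\beta)/p^3,
\]
but the right-hand side is only \emph{cyclic}—not fully symmetric—in $\alpha,\beta,\gamma$: the two cyclic sums of this shape differ by $\pm\sqrt{\operatorname{disc}(r)}$, so Vieta alone (equivalently, Newton's identities applied to the elementary symmetric functions of $\alpha,\beta,\gamma$) cannot pin it down. I therefore propose to follow the theta-function route of Son \cite{Son}, \cite[pp.~180--194]{AndrewsBerndtII}: from \eqref{def:u,v,w} write
\[
u^7+v^7+w^7 = \frac{2^7}{\varphi^7(q^7)}\bigl(qf^7(q^5,q^9)+q^4f^7(q^3,q^{11})+q^9f^7(q,q^{13})\bigr),
\]
and evaluate the bracket by repeated application of the general dissection identity \eqref{entry31}, the Jacobi triple product \eqref{Jacobi-triple-product}, and Ramanujan's theta function multiplication formulas from Chapter~16 of the second notebook. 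These steps collapse the bracket into a polynomial in $\varphi^4(q)/\varphi^4(q^7)$ and $p$, whose simplification matches the right-hand side of \eqref{u^7}.

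The main obstacle is (ii): the non-symmetric character of the cyclic sum $\alpha^3\gamma+\beta^3\alpha+\gamma^3\beta$ rules out any shortcut through the symmetric data of $r$, so the bulk of the work has to be absorbed by the theta-function bookkeeping rather than algebraic manipulation of the roots of $r$.
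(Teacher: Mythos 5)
First, note that the paper does not actually prove this lemma: its entire proof is the citation ``See Son's article \cite{Son} or the Andrews--Berndt book \cite[pp.~185--186, 194]{AndrewsBerndtII}.'' Your part (ii) lands in essentially the same place. You correctly observe that $u^7+v^7+w^7=\alpha^3\gamma/p^3+\beta^3\alpha/p^3+\gamma^3\beta/p^3$ is only a cyclic, not symmetric, function of the roots of $r$, so Vieta/Newton cannot reach it --- this is exactly why the paper's Lemma~\ref{lemma:a,b,c-order} can use \eqref{u^7} to single out the correct cyclic order --- but your ``proof'' of \eqref{u^7} then consists of announcing that Son's theta-function dissection will ``collapse the bracket into a polynomial whose simplification matches the right-hand side.'' None of that computation is carried out; in \cite[pp.~185--194]{AndrewsBerndtII} it is a substantial chain of subsidiary identities among $u$, $v$, and $w$. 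As a blind proof attempt, part (ii) is therefore a citation dressed as a sketch, not an argument: that is the genuine gap.

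Second, your derivation of part (i) from Entry~\ref{entry:Ramanujan}\eqref{u,v,w} and \eqref{r} is circular relative to the only proof of \eqref{r} on record. In Son's development the cubic $r$ is \emph{constructed} as the monic polynomial whose roots are $u^3v$, $v^3w$, $w^3u$; the assertion that its $\xi^2$-coefficient equals $2(1+3p-\varphi^4(q)/\varphi^4(q^7))$ \emph{is} identity \eqref{u^3v}, and Son proves \eqref{u^3v} in order to establish \eqref{r}, not conversely. Reading that coefficient back off therefore proves nothing new unless one has an independent proof of \eqref{r}. The computation itself is fine as far as it goes: from $u^7=\alpha^2p/\beta$ and $\alpha\beta\gamma=p^4$ one gets $(u^3v)^7=\alpha^7$, hence $u^3v=\zeta\alpha$ for some seventh root of unity $\zeta$, with $\zeta=1$ for $0<q<1$ and then for $|q|<1$ by analytic continuation --- a branch issue you gesture at rather than settle. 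If your intent is only to show that \eqref{u^3v} is a formal consequence of Entry~\ref{entry:Ramanujan}\eqref{u,v,w}--\eqref{r} taken as black boxes, that is correct but adds nothing; the honest proof of both parts of the lemma is Son's, which is precisely what the paper cites.
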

\begin{proof}
See Son's article \cite{Son} or the Andrews--Berndt book \cite[pp.~185--186, 194]{AndrewsBerndtII}.
\end{proof}

\pagebreak

Next, we give the correct root of Entry~\ref{entry:Ramanujan}\eqref{phi-to-p} for $\varphi^4(q)/\varphi^4(q^7)$, when $0 < q < 1$.

\begin{lemma}\label{lemma:phi-to-p} For $0 < q < 1$,
\begin{equation*}
\frac{\varphi^4(q)}{\varphi^4(q^7)} = 1 + \frac{5p}{2} + \frac12 \sqrt{(2 + 5p)^2 - 4(1 - p)^3}.
\end{equation*}
\end{lemma}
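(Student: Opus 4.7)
The plan is to treat Entry~\ref{entry:Ramanujan}\eqref{phi-to-p} as a quadratic equation in $X := \varphi^4(q)/\varphi^4(q^7)$, apply the quadratic formula to obtain the two candidate values
\[
X_\pm \;=\; 1 + \frac{5p}{2} \pm \frac{1}{2}\sqrt{(2 + 5p)^2 - 4(1 - p)^3},
\]
and then rule out $X_-$ by a size comparison valid throughout $0 < q < 1$.

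For the separation I would first establish $X > 1$ on $(0,1)$. This is immediate from the series $\varphi(q) = 1 + 2\sum_{n \geq 1} q^{n^2}$ recorded in \eqref{def:phi}: $\varphi$ is strictly increasing and positive on $[0,1)$ and $q > q^7$, so $\varphi(q) > \varphi(q^7) > 0$ and hence $X > 1$. (Alternatively, Lemma~\ref{lemma:Son-supp}\eqref{u^3v} combined with Lemma~\ref{lemma:u,v,w,p-supp}\ref{u,v,w-supp} yields $X > 3p + 1 > 1$, since $u,v,w > 0$ forces $u^3v + v^3w + w^3u > 0$.)

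It remains to show $X_- < 1$ whenever $p > 0$, which is guaranteed on $(0,1)$ by Lemma~\ref{lemma:u,v,w,p-supp}\ref{p-supp}. Since both $5p$ and the radical are then strictly positive, $X_- < 1$ is equivalent to $25p^2 < (2 + 5p)^2 - 4(1 - p)^3$, which simplifies to $(1 - p)^3 < 1 + 5p$. The factorisation
\[
1 + 5p - (1 - p)^3 \;=\; p\bigl(p^2 - 3p + 8\bigr),
\]
together with the negative discriminant $9 - 32$ of $p^2 - 3p + 8$, makes the right-hand side strictly positive for every $p > 0$; hence $X_- < 1$. Combined with $X > 1$, this forces $X = X_+$, which is the stated formula.

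The only point that demands real care is checking positivity of both sides before squaring, so that the equivalences in the final paragraph genuinely go in both directions; the remainder is a routine algebraic identity and sign check, so I expect no serious obstacle.
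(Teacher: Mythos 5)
Your proof is correct, and while it shares the paper's overall skeleton (solve the quadratic for $X=\varphi^4(q)/\varphi^4(q^7)$, then discard the root $X_-$ by a lower bound on $X$), the decisive inequality is obtained by a genuinely different and more elementary route. The paper's proof takes $X\ge 1+3p$ from the identity $u^3v+v^3w+w^3u=2(X-3p-1)$ of Lemma~\ref{lemma:Son-supp}\eqref{u^3v} together with the positivity of $u,v,w$; with that stronger threshold the exclusion of $X_-$ is immediate, since $X_-<1+3p$ reduces to $\sqrt{(2+5p)^2-4(1-p)^3}>-p$. You instead use only the monotonicity of $\varphi$ on $(0,1)$ to get $X>1$, and then pay for the weaker threshold with the algebraic computation $1+5p-(1-p)^3=p(p^2-3p+8)>0$ (correct, and the sign analysis via the negative discriminant is sound), which gives $X_-<1$. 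Your parenthetical alternative is precisely the paper's argument. What your main route buys is independence from Lemma~\ref{lemma:Son-supp}, i.e.\ from Son's septic identities, at the cost of a slightly longer elementary verification; the paper's route is shorter once that machinery is already in place, as it is here since Lemma~\ref{lemma:Son-supp}\eqref{u^7} is needed later anyway. Your care about positivity before squaring is exactly the right point to flag, and it is handled correctly since $5p>0$ and the computation itself shows $(2+5p)^2-4(1-p)^3>25p^2\ge 0$.
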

\begin{proof}
From Lemma~\ref{lemma:u,v,w,p-supp}\ref{u,v,w-supp}, we know that $u, v, w > 0$, thus by Lemma~\ref{lemma:Son-supp}\eqref{u^3v} we have
\begin{equation*}
\frac{\varphi^4(q)}{\varphi^4(q^7)} \geq 1 + 3p.
\end{equation*}
By solving Entry~\ref{entry:Ramanujan}\eqref{phi-to-p} for $\varphi^4(q)/\varphi^4(q^7)$, we find that only the given solution satisfies this.
\end{proof}

The cubic polynomial $r$ defined in Entry~\ref{entry:Ramanujan}\eqref{r} has the following property.

\begin{lemma}\label{lemma:distinct,positive-roots} For $0 < q < 1$, $r$ has three distinct positive roots.
\end{lemma}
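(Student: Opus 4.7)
The plan is to establish two things: (a) every real root of $r$ is strictly positive; and (b) the discriminant of $r$ is strictly positive. Together these force $r$ to have three real, distinct, positive roots. Writing $r(\xi) = \xi^3 - S_1\xi^2 + S_2\xi - S_3$, where $S_1 = 2(M - 1 - 3p)$, $S_2 = p^2(p+4)$, $S_3 = p^4$, and $M := \varphi^4(q)/\varphi^4(q^7)$, statement (a) is immediate: Lemma~\ref{lemma:Son-supp}\eqref{u^3v} together with $u, v, w > 0$ from Lemma~\ref{lemma:u,v,w,p-supp}\ref{u,v,w-supp} gives $S_1 = u^3v + v^3w + w^3u > 0$, and $S_2, S_3 > 0$ follows from $p > 0$; then for every $\xi \le 0$ each of the four terms in $\xi^3 - S_1\xi^2 + S_2\xi - S_3$ is non-positive with $-S_3 < 0$, so $r(\xi) < 0$, excluding any non-positive real root.

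For (b), set $N := M - 1 - 3p > 0$, so $S_1 = 2N$. Lemma~\ref{lemma:phi-to-p} gives $2N + p = \sqrt{(2+5p)^2 - 4(1-p)^3} = \sqrt{4p^3 + 13p^2 + 32p}$; squaring produces the quadratic relation
\begin{equation*}
N(N + p) = p(p^2 + 3p + 8).
\end{equation*}
Substitute $S_1 = 2N$, $S_2 = p^2(p+4)$, $S_3 = p^4$ into the standard discriminant $D = S_1^2 S_2^2 - 4S_2^3 - 4S_1^3 S_3 + 18 S_1 S_2 S_3 - 27 S_3^2$, and apply the relation above to rewrite $N^2$ and $N^3$ as expressions linear in $N$. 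The dependence on $N$ then collapses to a single term and
\begin{equation*}
D = p^5 \Bigl( L - 8(p+20)\sqrt{4p^3 + 13p^2 + 32p}\,\Bigr), \qquad L := p^3 + 104p^2 + 608p + 512.
\end{equation*}
A direct polynomial expansion yields the clean identity
\begin{equation*}
L^2 - 64(p+20)^2(4p^3 + 13p^2 + 32p) = (p - 8)^6.
\end{equation*}
Since $0 < p < 8$ by Lemma~\ref{lemma:u,v,w,p-supp}\ref{p-supp}, both $L$ and the square-root term are strictly positive, $L^2$ strictly exceeds its subtrahend, hence $L$ strictly exceeds the square-root term, and $D > 0$.

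The only delicate step is the simplification of $D$: once the quadratic substitution from Lemma~\ref{lemma:phi-to-p} eliminates all higher powers of $N$, the factorization $L^2 - R^2 = (p-8)^6$ must fall out of a pure polynomial expansion. This is the numerical miracle that makes the argument work; everything else is routine bookkeeping.
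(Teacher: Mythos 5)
Your proof is correct, and its core---the discriminant argument---is essentially the paper's: your identity $L^2 - 64(p+20)^2(4p^3+13p^2+32p) = (p-8)^6$ is precisely the paper's $\Delta_+\Delta_- = p^{10}(p-8)^6$ after stripping the factor $p^{10}$, and both proofs invoke Lemma~\ref{lemma:phi-to-p} to select the correct (minus) branch and Lemma~\ref{lemma:u,v,w,p-supp}\ref{p-supp} to get $0<p<8$ and hence a positive discriminant. (I checked your reduction: with $N(N+p)=p(p^2+3p+8)$ the discriminant does collapse to $D=C_0+C_1N$ with $C_0=p^5(p^3+96p^2+448p+512)$ and $C_1=-16p^5(p+20)$, which yields exactly your $L$.) Where you genuinely depart from the paper is the positivity of the roots: the paper deduces it from the construction in Entry~\ref{entry:Ramanujan}\eqref{u,v,w} together with $u,v,w,p>0$, whereas you note that the coefficient signs alone force it, since $S_1=u^3v+v^3w+w^3u>0$ by Lemma~\ref{lemma:Son-supp}\eqref{u^3v} and $S_2,S_3>0$, so $r(\xi)<0$ for all $\xi\le 0$. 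Your variant is slightly more self-contained, as it does not presuppose that the representation in Entry~\ref{entry:Ramanujan}\eqref{u,v,w} has already matched the roots to the positive quantities $u,v,w$; the paper's version is shorter but leans on that construction.
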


\begin{proof}
We recall that if a cubic polynomial with real coefficients has a positive discriminant, then it has three distinct real roots. For $|q| < 1$, depending on the value of $\varphi^4(q)/\varphi^4(q^7)$ from Entry~\ref{entry:Ramanujan}\eqref{phi-to-p}, $r$ has one of the following two possible discriminants:
\begin{equation*}
\Delta_{\pm} = p^5\bigg(p^3 + 104p^2 + 608p + 512 \pm (8p^{3/2} + 160\sqrt{p})\sqrt{4p^2 + 13p + 32}\bigg).
\end{equation*}
From Lemma~\ref{lemma:phi-to-p}, we know that for $0 < q < 1$, the discriminant of $r$ is $\Delta_{-}$. It follows by elementary algebra that
\begin{align*}
\Delta_{+}\Delta_{-} = p^{10}(p - 8)^6.
\end{align*}
It is clear that for $p > 0$, we have $\Delta_{+} > 0$, and for $p> 0$ and $p \neq 8$, we have $p^{10}(p - 8)^6 > 0$. Since by Lemma~\ref{lemma:u,v,w,p-supp}\ref{p-supp} if $0 < q < 1$, then $0 < p < 8$; thus we find that $\Delta_{-} > 0$.

From Lemma~\ref{lemma:u,v,w,p-supp} we know that if $0 < q < 1$, then $p > 0$ and $u, v, w > 0$. Thus, from the construction in Entry~\ref{entry:Ramanujan}\eqref{u,v,w} it follows that the roots of $r$ are positive.
\end{proof}

The next lemma helps to choose the correct order of the roots of $r$ in the case of $0 < q < 1$.

\begin{lemma}\label{lemma:a,b,c-order} For $0 < q < 1$, suppose that the roots of $r$ are given in order $(\alpha, \beta, \gamma)$ such that they satisfy the following two conditions:
\leqnomode
\begin{equation}\label{a,b,c-order-cond1}
\frac{\alpha^2 p}{\beta} + \frac{\beta^2 p}{\gamma} + \frac{\gamma^2 p}{\alpha} = \frac{\varphi^8(q)}{\varphi^8(q^7)} - 7(p - 2)\frac{\varphi^4(q)}{\varphi^4(q^7)} + 7p^2 -49p - 15\tag{i}
\end{equation}
and
\begin{equation}\label{a,b,c-order-cond2}
\frac{\alpha^2}{\beta} > \frac{\beta^2}{\gamma} > \frac{\gamma^2}{\alpha}.\tag{ii}
\end{equation}
\reqnomode
Then
\begin{equation*}
u = \bigg(\frac{\alpha^2 p}{\beta}\bigg)^{1/7}, \quad
v = \bigg(\frac{\beta^2 p}{\gamma}\bigg)^{1/7}, \text{\quad and \quad}
w = \bigg(\frac{\gamma^2 p}{\alpha}\bigg)^{1/7}.
\end{equation*}
\end{lemma}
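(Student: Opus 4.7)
The plan is to exploit the natural cyclic symmetry of the three expressions $\alpha^2 p/\beta$, $\beta^2 p/\gamma$, $\gamma^2 p/\alpha$ under rotation of $(\alpha,\beta,\gamma)$. The result of Son cited beneath Entry~\ref{entry:Ramanujan}\eqref{u,v,w} guarantees that there exists at least one ordering $(\alpha_0, \beta_0, \gamma_0)$ of the three roots of $r$ for which $u^7 = \alpha_0^2 p/\beta_0$, $v^7 = \beta_0^2 p/\gamma_0$, $w^7 = \gamma_0^2 p/\alpha_0$. The goal is to show that this ordering is exactly the one singled out by the two hypotheses \eqref{a,b,c-order-cond1} and \eqref{a,b,c-order-cond2}.

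First I would check that Son's ordering itself satisfies both conditions. Condition \eqref{a,b,c-order-cond1} is then simply Lemma~\ref{lemma:Son-supp}\eqref{u^7} restated in the form $\alpha_0^2 p/\beta_0 + \beta_0^2 p/\gamma_0 + \gamma_0^2 p/\alpha_0 = u^7 + v^7 + w^7$. For \eqref{a,b,c-order-cond2}, Lemma~\ref{lemma:u,v,w,p-supp}\ref{u,v,w-supp} yields $u > v > w > 0$, so $u^7 > v^7 > w^7$, and dividing by the positive factor $p$ produces the strict inequalities claimed.

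The crux is uniqueness among the six permutations of $(\alpha_0, \beta_0, \gamma_0)$. These split into two cyclic orbits of three: the cyclic rotations of Son's ordering, and the three transpositions. On a single orbit the multiset $\{\alpha^2 p/\beta,\, \beta^2 p/\gamma,\, \gamma^2 p/\alpha\}$, and hence the sum appearing in \eqref{a,b,c-order-cond1}, is constant. Condition \eqref{a,b,c-order-cond1} therefore isolates the correct orbit provided the two orbit sums differ, and a direct symmetric-polynomial manipulation yields the factorization
\begin{equation*}
\left(\frac{\alpha_0^2}{\beta_0} + \frac{\beta_0^2}{\gamma_0} + \frac{\gamma_0^2}{\alpha_0}\right) - \left(\frac{\alpha_0^2}{\gamma_0} + \frac{\gamma_0^2}{\beta_0} + \frac{\beta_0^2}{\alpha_0}\right) = \frac{(\alpha_0 - \beta_0)(\beta_0 - \gamma_0)(\gamma_0 - \alpha_0)(\alpha_0 + \beta_0 + \gamma_0)}{\alpha_0 \beta_0 \gamma_0},
\end{equation*}
which is nonzero by Lemma~\ref{lemma:distinct,positive-roots}, since the roots are distinct and positive. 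Hence \eqref{a,b,c-order-cond1} forces the cyclic orbit of Son's ordering. Within that orbit, \eqref{a,b,c-order-cond2} is a strict three-term decrease applied to a cyclic rotation of $(u^7/p,\, v^7/p,\, w^7/p)$, and since this sequence is already strictly decreasing only the identity rotation survives.

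I expect the main nontrivial step to be the symmetric-function factorization used to distinguish the two orbits; it is standard but must be checked carefully to guarantee nonvanishing under the stated hypotheses. With that identity in hand, the existence and uniqueness halves together force $(\alpha, \beta, \gamma) = (\alpha_0, \beta_0, \gamma_0)$, and the three formulas for $u, v, w$ follow immediately.
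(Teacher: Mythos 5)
Your proposal is correct and follows essentially the same route as the paper: both arguments reduce the six permutations to two cyclic orbits, invoke Son's identity (Lemma~\ref{lemma:Son-supp}\eqref{u^7}) for existence, distinguish the orbits via the factorization of the difference of the two orbit sums (nonzero by Lemma~\ref{lemma:distinct,positive-roots} and $p>0$), and then use condition \eqref{a,b,c-order-cond2} together with $u>v>w>0$ from Lemma~\ref{lemma:u,v,w,p-supp} to single out the identity rotation within the correct orbit. Your final step spelling out why only one cyclic rotation of a strictly decreasing triple is itself strictly decreasing is a slightly more explicit rendering of what the paper states tersely, but the substance is identical.
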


Condition~\eqref{a,b,c-order-cond1} guarantees the correct order of $\alpha, \beta$, and $\gamma$ in Entry~\ref{entry:Ramanujan}\eqref{u,v,w}, so that Entry~\ref{entry:Ramanujan}\eqref{1+u+v+w} holds. Condition~\eqref{a,b,c-order-cond2} provides the correct order of $u, v,$ and $w$, so that \eqref{def:u,v,w} holds.

\begin{proof}
First, for each possible order of $\alpha, \beta,$ and $\gamma$, consider the set of possible values of $u, v,$ and $w$ given in Entry~\ref{entry:Ramanujan}\eqref{u,v,w}. For $(\alpha, \beta, \gamma), (\beta, \gamma, \alpha), (\gamma, \alpha, \beta)$ and for $(\gamma, \beta, \alpha), (\beta, \alpha, \gamma), (\alpha, \gamma, \beta)$, we have
\begin{equation*}
\Bigg\{\bigg(\frac{\alpha^2 p}{\beta}\bigg)^{1/7},\, \bigg(\frac{\beta^2 p}{\gamma}\bigg)^{1/7},\, \bigg(\frac{\gamma^2 p}{\alpha}\bigg)^{1/7}\Bigg\} \text{\quad and \quad} \Bigg\{\bigg(\frac{\gamma^2 p}{\beta}\bigg)^{1/7},\, \bigg(\frac{\beta^2 p}{\alpha}\bigg)^{1/7},\, \bigg(\frac{\alpha^2 p}{\gamma}\bigg)^{1/7}\Bigg\}\,,
\end{equation*}
respectively. Thus, it is enough to consider the order $(\alpha, \beta, \gamma)$ and its reverse $(\gamma, \beta, \alpha)$. By Lemma~\ref{lemma:Son-supp}\eqref{u^7}, we know that for at least one of these two sets it is true that the sum of their seventh powers fulfills the condition in \eqref{a,b,c-order-cond1}. We show that exactly one of the two sets fulfills it. Suppose that
\begin{equation*}
\frac{\alpha^2 p}{\beta} + \frac{\beta^2 p}{\gamma} + \frac{\gamma^2 p}{\alpha} = \frac{\gamma^2 p}{\beta} + \frac{\beta^2 p}{\alpha} + \frac{\alpha^2 p}{\gamma}.
\end{equation*}
After rearrangement, we find that
\begin{equation*}
\frac{p(\alpha - \beta)(\alpha - \gamma)(\beta - \gamma)(\alpha + \beta + \gamma)}{\alpha\beta\gamma} = 0,
\end{equation*}
but this is contradiction, since $p$ is positive by Lemma~\ref{lemma:u,v,w,p-supp}\ref{p-supp} and $\alpha, \beta,$ and $\gamma$ are distinct, positive numbers by Lemma~\ref{lemma:distinct,positive-roots}.

Since $p$ is positive by Lemma~\ref{lemma:u,v,w,p-supp}\ref{p-supp}, the condition \eqref{a,b,c-order-cond2} guarantees that $u > v > w$, which holds by Lemma~\ref{lemma:u,v,w,p-supp}\ref{u,v,w-supp}.
\end{proof}

Lastly, we need the following trigonometric identity.

\begin{lemma}\label{lemma:trig}
We have
\begin{equation*}
\bigg(\frac{\cos\frac{\pi}{7}}{2\cos^2\frac{2\pi}{7}}\bigg)^2 + \bigg(\frac{\cos\frac{2\pi}{7}}{2\cos^2\frac{3\pi}{7}}\bigg)^2 + \bigg(\frac{\cos\frac{3\pi}{7}}{2\cos^2\frac{\pi}{7}}\bigg)^2 = 41.
\end{equation*}
\end{lemma}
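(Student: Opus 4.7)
The plan is to translate the identity into a polynomial relation among the roots of the minimal polynomial of $2\cos(2\pi/7)$. Set $z_k := 2\cos(2k\pi/7)$ for $k=1,2,3$; these are the three real roots of $g(z) = z^3 + z^2 - 2z - 1$, with elementary symmetric functions $e_1 = -1$, $e_2 = -2$, $e_3 = 1$. The identity $2\cos^2\theta = 1+\cos(2\theta)$ gives $2\cos^2(k\pi/7) = (2+z_k)/2$ for $k = 1,2,3$, while $\cos(2\pi/7) = z_1/2$, $\cos(\pi/7) = -\cos(6\pi/7) = -z_3/2$, and $\cos(3\pi/7) = -\cos(4\pi/7) = -z_2/2$ rewrite the numerators as $\pm z_j/2$. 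After the factors of $2$ cancel, the claim becomes equivalent to
\[
S := \frac{z_3^2}{(2+z_2)^2} + \frac{z_1^2}{(2+z_3)^2} + \frac{z_2^2}{(2+z_1)^2} = 41.
\]

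Next I would clear the denominators by computing the multiplicative inverse of $(2+z)^2$ in $\mathbb{Q}[z]/(g(z))$. A short linear-algebra calculation using $z^3 = -z^2+2z+1$ and $z^4 = 3z^2 - z - 1$ yields $1/(2+z_k)^2 = 6z_k^2 - 5z_k - 3$ for each root. Substituting and regrouping gives
\[
S = 6\sum_{k=1}^{3} z_{\sigma(k)}^2 z_k^2 \;-\; 5\sum_{k=1}^{3} z_{\sigma(k)}^2 z_k \;-\; 3\sum_{k=1}^{3} z_{\sigma(k)}^2,
\]
where $\sigma$ denotes the cyclic shift $1 \to 2 \to 3 \to 1$. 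The symmetric pieces $\sum z_k^2 = e_1^2 - 2e_2 = 5$ and $\sum z_i^2 z_j^2 = e_2^2 - 2e_1 e_3 = 6$ are immediate from Newton's identities.

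The main obstacle is the remaining cyclic (but not fully symmetric) sum $T' := z_2^2 z_1 + z_3^2 z_2 + z_1^2 z_3$. I would evaluate $T'$ by writing $z_k = \zeta^k + \zeta^{-k}$ with $\zeta = e^{2\pi i/7}$ and expanding each monomial; via $\zeta^7 = 1$, every product $z_j^2 z_k$ reduces to a $\mathbb{Z}$-linear combination of $z_1, z_2, z_3$ (for instance $z_1^2 z_3 = z_1 + z_2 + 2z_3$), and summing gives $T' = 4(z_1+z_2+z_3) = -4$. Substituting then produces $S = 6\cdot 6 - 5\cdot(-4) - 3\cdot 5 = 41$, the required identity.
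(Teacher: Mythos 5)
Your proof is correct, and it takes a genuinely different route from the paper's. The paper sets $a=\cos(\pi/7)$, $b=\cos(2\pi/7)$, $c=\cos(3\pi/7)$, puts the three squares over the common denominator $4(abc)^4$, and then linearizes the degree-ten numerator $a^6c^4+b^6a^4+c^6b^4$ by repeated power-reduction and product-to-sum identities, finishing with the heptagonal-triangle facts $b-c-a=-1/2$ and $abc=1/8$. You instead pass to $z_k=2\cos(2k\pi/7)$, the roots of $z^3+z^2-2z-1$, compute the inverse of $(2+z)^2$ in $\mathbb{Q}[z]/(z^3+z^2-2z-1)$ (your value $6z^2-5z-3$ checks out, as does $z^4\equiv 3z^2-z-1$), and split the resulting sum into fully symmetric pieces handled by Newton's identities plus the single cyclic sum $T'=z_2^2z_1+z_3^2z_2+z_1^2z_3=-4$, which you evaluate via $z_j z_k = z_{j+k}+z_{j-k}$ with indices read modulo $7$; the arithmetic $6\cdot 6-5\cdot(-4)-3\cdot 5=41$ is right. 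What your approach buys is systematization: inverting the denominator in the cubic field and reducing to symmetric-plus-cyclic data is an algorithm that would apply verbatim to other rational expressions in the $\cos(k\pi/7)$, whereas the paper's product-to-sum expansion is an ad hoc (if elementary and self-contained) computation; the cost is that you must justify the field inversion and the correct cyclic pairing of numerators with denominators, which you do carefully and correctly.
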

\begin{proof}
Let $\theta := \pi / 7$, and let
\begin{align*}
a &:= \cos \theta\phantom{1} = -\cos 6\theta = -\cos 8\theta,\\
b &:= \cos 2\theta = -\cos 5\theta = \cos 12\theta,\\
c &:= \cos 3\theta = -\cos 4\theta = -\cos 10\theta = -\cos 18\theta.
\end{align*}
By using power-reduction \cite[p.~32]{GradshteynRyzhik7} and product-to-sum \cite[p.~29]{GradshteynRyzhik7} identities, we derive that
\begin{align*}\label{lemma:trig-eqn}
&\bigg(\frac{a}{2b^2}\bigg)^2 + \bigg(\frac{b}{2 c^2}\bigg)^2 + \bigg(\frac{c}{2 a^2}\bigg)^2 = \frac{a^6 c^4 + b^6 a^4 +c^6 b^4}{4 (abc)^4}\\
&\qquad = \frac{1}{1024(abc)^4}\big\{(10 + 15b - 6c - a)(3 - 4a + b) + (10 - 15c - 6a + b)(3 + 4b - c)\\ 
&\qquad\qquad\qquad + (10 - 15a + 6b - c)(3 - 4c - a)\big\}\\
&\qquad = \frac{1}{1024(abc)^4}\big\{90 + 116(b - c - a) + 91(ca - ab - bc) + 19(a^2 + b^2 + c^2)\big\}\\
&\qquad = \frac{1}{1024(abc)^4}\Big\{90 + 116(b - c - a) + 91\Big(\tfrac{1}{2}(b - c) - \tfrac{1}{2}(a + c) - \tfrac{1}{2}(a - b)\Big) \\
&\qquad\qquad\qquad + 19\Big(\tfrac{1}{2}(1 + b) + \tfrac{1}{2}(1 - c) + \tfrac{1}{2}(1 - a)\Big)\Big\}\\
&\qquad = \frac{433(b - c - a) + 237}{2048(abc)^4}.
\end{align*}
Since we know \cite{BankoffGarfunkel} that $b - c  - a = -1/2$ and $abc = 1/8$, the proof is complete.
\end{proof}

\section{The three missing terms}\label{section:three-missing-terms}

We complete Ramanujan's enigmatic septic theta function identity Entry~\ref{entry:Ramanujan}\eqref{enigmatic}.

\begin{theorem}\label{thm:enigmatic} We have
\begin{equation*}
\varphi(e^{-7\pi\sqrt{7}}) = 7^{-3/4}\varphi(e^{-\pi\sqrt{7}})\Bigg\{1 + \bigg(\frac{\cos\frac{\pi}{7}}{2\cos^2\frac{2\pi}{7}}\bigg)^{2/7} + \bigg(\frac{\cos\frac{2\pi}{7}}{2\cos^2\frac{3\pi}{7}}\bigg)^{2/7} + \bigg(\frac{\cos\frac{3\pi}{7}}{2\cos^2\frac{\pi}{7}}\bigg)^{2/7}\Bigg\}.
\end{equation*}
\end{theorem}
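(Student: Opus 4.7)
The plan is to specialize Ramanujan's procedure \eqref{1+u+v+w}--\eqref{r} to $q = e^{-\pi\sqrt{7}}$, so that $q^{1/7} = e^{-\pi/\sqrt{7}}$ and $q^7 = e^{-7\pi\sqrt{7}}$. Combining Entry~\ref{entry:Ramanujan}\eqref{1+u+v+w} with Lemma~\ref{lemma:transform} gives
\begin{equation*}
1 + u + v + w \;=\; \frac{\varphi(q^{1/7})}{\varphi(q^7)} \;=\; \frac{7^{1/4}\,\varphi(e^{-\pi\sqrt{7}})}{\varphi(e^{-7\pi\sqrt{7}})},
\end{equation*}
so the theorem reduces to identifying $u,v,w$ individually in the claimed cosine form; the factor $7^{-3/4}$ on the right-hand side of the theorem will correspond via this display to $1 + u + v + w = \sqrt{7}$, which the computation will produce automatically.

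First I would apply Lemma~\ref{lemma:p} with $n = 7$, using the established value of $G_7$ together with the closed form of $G_{343}$ obtained in Section~\ref{section:G343}, to deduce $p = 1$. Substituting $p = 1$ into Lemma~\ref{lemma:phi-to-p} collapses the square root and yields $\varphi^4(q)/\varphi^4(q^7) = 7$, so the previous display already forces $1 + u + v + w = \sqrt{7}$. Feeding $p = 1$ and $\varphi^4(q)/\varphi^4(q^7) = 7$ into Entry~\ref{entry:Ramanujan}\eqref{r} then leaves the cubic
\begin{equation*}
r(\xi) = \xi^3 - 6\xi^2 + 5\xi - 1.
\end{equation*}
The three numbers $\rho_k := 1/(4\cos^2(k\pi/7))$, $k = 1, 2, 3$, satisfy Vieta's relations for $r$, thanks to the classical identities $\sum_{k=1}^{3}\sec^2(k\pi/7) = 24$, $\sum_{k=1}^{3}\cos^2(k\pi/7) = 5/4$, and $\prod_{k=1}^{3}\cos(k\pi/7) = 1/8$, and are therefore its three roots.

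To fix the correct order, I would invoke Lemma~\ref{lemma:a,b,c-order} with the assignment $\alpha = \rho_3$, $\beta = \rho_2$, $\gamma = \rho_1$. A short computation gives
\begin{equation*}
\frac{\alpha^2 p}{\beta} = \bigg(\frac{\cos\frac{2\pi}{7}}{2\cos^2\frac{3\pi}{7}}\bigg)^{\!2},\quad \frac{\beta^2 p}{\gamma} = \bigg(\frac{\cos\frac{\pi}{7}}{2\cos^2\frac{2\pi}{7}}\bigg)^{\!2},\quad \frac{\gamma^2 p}{\alpha} = \bigg(\frac{\cos\frac{3\pi}{7}}{2\cos^2\frac{\pi}{7}}\bigg)^{\!2}.
\end{equation*}
Condition~\eqref{a,b,c-order-cond2} follows from direct estimates using $0 < \cos(3\pi/7) < \cos(2\pi/7) < \cos(\pi/7)$, while condition~\eqref{a,b,c-order-cond1} is exactly Lemma~\ref{lemma:trig}, since the right-hand side of Lemma~\ref{lemma:Son-supp}\eqref{u^7} at $(p, \varphi^4(q)/\varphi^4(q^7)) = (1, 7)$ simplifies to $49 + 49 + 7 - 49 - 15 = 41$. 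Taking seventh roots then identifies $u, v, w$ as the three terms appearing in the theorem, and the identity is established.

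The main obstacle is the first step: the evaluation $p = 1$ is not at all obvious from Lemma~\ref{lemma:p} and rests entirely on the explicit closed form of $G_{343}$ computed in Section~\ref{section:G343}. Once $p = 1$ is in hand, everything downstream is essentially automatic — the cubic factors through a transparent trigonometric ansatz, and Lemmas~\ref{lemma:a,b,c-order} and~\ref{lemma:trig} were designed to execute precisely the bookkeeping needed to single out $u, v, w$ from the six possible orderings of the roots of $r$.
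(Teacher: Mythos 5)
Your specialization of Entry~\ref{entry:Ramanujan} is made at the wrong value of $q$, and this breaks the first step of the argument. With your choice $q=e^{-\pi\sqrt{7}}$, Lemma~\ref{lemma:p} gives $p=2\sqrt{2}\,G_{7}/G_{343}^{7}$, which is \emph{not} equal to $1$: if it were, then $G_{343}$ would equal $2^{1/4}=G_{7}$, contradicting Theorem~\ref{thm:G343} (whose value of $p$ is an explicit cubic irrationality). Worse, appealing to the closed form of $G_{343}$ from Section~\ref{section:G343} is circular, since that closed form is itself derived from Theorem~\ref{thm:enigmatic}. The same miscalibration shows up in your reduction of the left-hand side: with your $q$ one gets $1+u+v+w=7^{1/4}\varphi(e^{-\pi\sqrt{7}})/\varphi(e^{-7\pi\sqrt{7}})\approx 1.627$, whereas the bracket in the theorem is $\approx 4.301\approx 7^{3/4}$; your claim that ``$1+u+v+w=\sqrt{7}$'' is numerically false, and identifying $u,v,w$ with the three cosine terms would then prove the reciprocal of the desired identity, not the identity itself. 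Equally, $\varphi^{4}(q)/\varphi^{4}(q^{7})$ at your $q$ is the square of the degree-$7$ multiplier $m$ of Theorem~\ref{thm:G343}, not $7$.

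The fix is to take $q=e^{-\pi/\sqrt{7}}$, so that $q^{7}=e^{-\pi\sqrt{7}}$ and, by Lemma~\ref{lemma:transform} with $n=343$, $\varphi(q^{1/7})=7^{3/4}\varphi(e^{-7\pi\sqrt{7}})$; Entry~\ref{entry:Ramanujan}\eqref{1+u+v+w} then reads exactly as the theorem with the bracket equal to $1+u+v+w$. For this $q$, Lemma~\ref{lemma:p} is applied with $n=1/7$, and $p=2\sqrt{2}\,G_{1/7}/G_{7}^{7}=1$ follows from $G_{1/7}=G_{7}=2^{1/4}$ (Lemma~\ref{lemma:G}), requiring no knowledge of $G_{343}$; likewise $\varphi^{4}(q)/\varphi^{4}(q^{7})=7$ is immediate from Lemma~\ref{lemma:transform}. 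Everything after that point in your write-up is sound and coincides with the paper's proof: the cubic $\xi^{3}-6\xi^{2}+5\xi-1$, its roots $1/(2\cos(k\pi/7))^{2}$ (you verify Vieta's relations where the paper instead transforms the cubic into the Chebyshev polynomial $U_{6}$, a cosmetic difference), the computation that the right-hand side of Lemma~\ref{lemma:Son-supp}\eqref{u^7} equals $41$ at $(p,\varphi^{4}(q)/\varphi^{4}(q^{7}))=(1,7)$, and the ordering of the roots via Lemmas~\ref{lemma:a,b,c-order} and~\ref{lemma:trig}.
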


\begin{proof}
We use the results in Entry~\ref{entry:Ramanujan}\eqref{1+u+v+w}--\eqref{r} with $q = \exp(-\pi/\sqrt{7})$. First, by using Lemma~\ref{lemma:transform}, we rewrite Entry~\ref{entry:Ramanujan}\eqref{1+u+v+w} as
\begin{equation*}
\varphi(e^{-7\pi\sqrt{7}}) = 7^{-3/4}\varphi(e^{-\pi\sqrt{7}})\{1 + u + v + w\}.
\end{equation*}
With $G_7$ given in \cite{Joubert}, \cite{Watson3}, \cite{Watson4}, \cite[p.~189]{BerndtV}, by Lemma~\ref{lemma:G}, we find that $G_{1/7} = G_7 = 2^{1/4}$. By Lemma~\ref{lemma:p} with $n = 1/7$, for Entry~\ref{entry:Ramanujan}\eqref{def:p} we have
\begin{equation*}
p = \frac{2\sqrt{2} G_{1/7}}{G_7^7} = \frac{2\sqrt{2} \cdot 2^{1/4}}{2^{7/4}} = 1.
\end{equation*}
Next, we solve the equation in Entry~\ref{entry:Ramanujan}\eqref{phi-to-p}. By Lemma~\ref{lemma:phi-to-p}, or in this special case by Lemma~\ref{lemma:transform}, we have
\begin{equation*}
\frac{\varphi^4(q)}{\varphi^4(q^7)} = \frac{\varphi^4(e^{-\pi/\sqrt{7}})}{\varphi^4(e^{-\pi\sqrt{7}})} = 7.
\end{equation*}
Now, we have all the coefficients of $r$ given in Entry~\ref{entry:Ramanujan}\eqref{r}. We have to determine the zeros of
\begin{equation*}
r(\xi) = \xi^3 - 6\xi^2 + 5\xi - 1.
\end{equation*}

With an appropriate polynomial transformation, we relate $r$ to $U_6$ defined in \eqref{def:U} or given in \cite[p.~994]{GradshteynRyzhik7}. Note that for $\xi \neq 0$,
\begin{align*}
-(2\xi)^6\,r\bigg(\frac{1}{(2\xi)^2}\bigg) = 64\xi^6 - 80\xi^4 + 24\xi^2 - 1 = U_6(\xi),
\end{align*}
and $r(0) = U_6(0) = -1$. From Lemma~\ref{lemma:roots}, we know that $U_6$ has the roots $\xi_k = \cos(k\pi/7),$ for~$k = 1,\dots, 6$, for which $\xi_k \neq 0$ and $|\xi_k| = |\xi_{7-k}|$. Thus, we find that
\begin{equation*}
r\Bigg(\frac{1}{(2\cos\frac{k\pi}{7})^2}\Bigg) = 0, \qquad k = 1, 2, 3.
\end{equation*}

Lastly, we determine the appropriate order of the roots $\alpha, \beta,$ and $\gamma$ in Entry~\ref{entry:Ramanujan}\eqref{u,v,w}. The choice
\begin{equation*}
(\alpha, \beta, \gamma) = \Bigg(\frac{1}{(2\cos\frac{3\pi}{7})^2}, \frac{1}{(2\cos\frac{2\pi}{7})^2}, \frac{1}{(2\cos\frac{\pi}{7})^2}\Bigg)
\end{equation*}
is correct, since the condition Lemma~\ref{lemma:a,b,c-order}\eqref{a,b,c-order-cond1} holds by Lemma~\ref{lemma:trig}, and Lemma~\ref{lemma:a,b,c-order}\eqref{a,b,c-order-cond2} is satisfied by the inequalities $0 < \cos(3\pi/7) < \cos(2\pi/7) < \cos(\pi/7) < 1$. We arrive at
\begin{equation*}
u = \Bigg(\frac{\cos\frac{2\pi}{7}}{2\cos^2\frac{3\pi}{7}}\Bigg)^{2/7}, \quad 
v = \Bigg(\frac{\cos\frac{\pi}{7}}{2\cos^2\frac{2\pi}{7}}\Bigg)^{2/7}, \text{\quad and \quad}
w = \Bigg(\frac{\cos\frac{3\pi}{7}}{2\cos^2\frac{\pi}{7}}\Bigg)^{2/7},
\end{equation*}
which completes the proof.
\end{proof}

By combining the value \eqref{e-sqrt7} and Theorem~\ref{thm:enigmatic}, we find the evaluation of $\varphi(e^{-7\pi\sqrt{7}})$, i.e.,
\begin{equation*}
\varphi(e^{-7\pi\sqrt{7}}) = \frac{\big\{\Gamma\big(\frac{1}{7}\big)\Gamma\big(\frac{2}{7}\big)\Gamma\big(\frac{4}{7}\big)\big\}^{1/2}}{\sqrt{2} \cdot 7^{7/8}\pi}\Bigg\{1 + \bigg(\frac{\cos\frac{\pi}{7}}{2\cos^2\frac{2\pi}{7}}\bigg)^{2/7} + \bigg(\frac{\cos\frac{2\pi}{7}}{2\cos^2\frac{3\pi}{7}}\bigg)^{2/7} + \bigg(\frac{\cos\frac{3\pi}{7}}{2\cos^2\frac{\pi}{7}}\bigg)^{2/7}\Bigg\}.
\end{equation*}

\section{The class invariant $G_{343}$ in closed-form}\label{section:G343}

Berndt \cite{BerndtForty}, Son \cite{Son}, and Andrews--Berndt \cite[p.~181]{AndrewsBerndtII} proposed the explicit value of the class invariant $G_{343}$ as an open problem. Actually, Watson showed in \cite{Watson3}, \cite{Watson4} that $G_{343} = 2^{1/4}x$, where $x^7 - 7x^6 - 7x^5 - 7x^4 - 1 = 0$. This can be proved by using a modular equation of degree $7$, given in Entry~19(ix) of Chapter 19 of Ramanujan's second notebook \cite[p.~240]{RamanujanEarlierII}, \cite[p.~315]{BerndtIII}, \cite[Lemma~3.5]{BerndtChanZhang3}, \cite[Theorem~2.4]{Zhang}. Watson \cite{Watson3} solved this septic polynomial as well. Thus, we have
\begin{equation*}
G_{343} = 2^{1/4}7\{b_1 + b_2 + b_3 + c_1 + c_2 + c_3\}^{-1},
\end{equation*}
where
\begin{alignat*}{2}
b_1 &= (b_1^{\prime 4}\, b_2^{\prime 2}\, b_3^{\prime})^{1/7}, & \qquad c_1 &= (c_1^{\prime 4}\, c_2^{\prime 2}\, c_3^{\prime})^{1/7},\\
b_2 &= (b_2^{\prime 4}\, b_3^{\prime 2}\, b_1^{\prime})^{1/7}, & \qquad c_2 &= (c_2^{\prime 4}\, c_3^{\prime 2}\, c_1^{\prime})^{1/7},\\
b_3 &= (b_3^{\prime 4}\, b_1^{\prime 2}\, b_2^{\prime})^{1/7}, & \qquad c_3 &= (c_3^{\prime 4}\, c_1^{\prime 2}\, c_2^{\prime})^{1/7},
\end{alignat*}
and
\begin{equation*}
b'_r = -\tfrac{1}{3}(3\sigma_r + 5\tau_r),\quad  c'_r = -\tfrac{1}{3}(7 + 4\sigma_r + 2\tau_r), \qquad r = 1, 2, 3,
\end{equation*}
\begin{equation*}
\tau_1 = \sigma_3 - \sigma_2,\quad
\tau_2 = \sigma_1 - \sigma_3,\quad
\tau_3 = \sigma_3 - \sigma_1,
\end{equation*}
\begin{equation*}
\sigma_r = \frac{1}{2} + 3\cos\frac{2^r\pi}{7}, \qquad r = 1, 2, 3.
\end{equation*}

In this section, we give a closed-form expression for $G_{343}$, based on our previous results.

\begin{theorem}\label{thm:G343} We have
\begin{equation*}
G_{343} = 2^{1/4}p^{-1/7},
\end{equation*}
where
\begin{equation}\label{G343p}
p = 1 + \frac{10m^2}{s^{1/3}} - \frac{s^{1/3}}{6},
\end{equation}
and
\begin{align*}
s &= 12m^2\Big(9(7 - m^2) + \sqrt{3}\sqrt{27(m^4 + 49) + 122m^2}\Big),\\
m &= 7^{3/2}\Bigg\{1 + \bigg(\frac{\cos\frac{\pi}{7}}{2\cos^2\frac{2\pi}{7}}\bigg)^{2/7} + \bigg(\frac{\cos\frac{2\pi}{7}}{2\cos^2\frac{3\pi}{7}}\bigg)^{2/7} + \bigg(\frac{\cos\frac{3\pi}{7}}{2\cos^2\frac{\pi}{7}}\bigg)^{2/7}\Bigg\}^{-2}.
\end{align*}
\end{theorem}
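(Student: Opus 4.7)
The plan is to apply Lemma~\ref{lemma:p} with $n = 7$ and $q = e^{-\pi\sqrt{7}}$. Using the classical value $G_7 = 2^{1/4}$ already invoked in the proof of Theorem~\ref{thm:enigmatic}, the lemma yields
\begin{equation*}
p = \frac{2\sqrt{2}\, G_7}{G_{343}^7} = \frac{2^{7/4}}{G_{343}^7},
\end{equation*}
from which $G_{343} = 2^{1/4} p^{-1/7}$ is immediate. The whole task therefore reduces to producing a closed form for $p$.

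To obtain such a closed form, I would first read off the multiplier $m$ defined in \eqref{def:multiplier} from Theorem~\ref{thm:enigmatic}: squaring the ratio $\varphi(e^{-\pi\sqrt{7}})/\varphi(e^{-7\pi\sqrt{7}})$ given there produces precisely the quantity $m$ appearing in the statement. Feeding this value of $m$ into Entry~\ref{entry:Ramanujan}\eqref{phi-to-p} and expanding $(1-p)^3$ turns that quadratic in $m^2 = \varphi^4(q)/\varphi^4(q^7)$ into a cubic in the unknown $p$:
\begin{equation*}
p^3 - 3p^2 + (5m^2 + 3)\, p - (m^2 - 1)^2 = 0.
\end{equation*}

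The remaining step is a Cardano calculation. The shift $p = y + 1$ eliminates the quadratic term, producing the depressed cubic $y^3 + 5m^2 y = m^2(m^2 - 7)$. Writing $y = A - B$ with the ansatz $A = 10 m^2/s^{1/3}$ and $B = s^{1/3}/6$ enforces the Cardano relation $3AB = 5m^2$ automatically, so only the condition $A^3 - B^3 = m^2(m^2-7)$ remains. Clearing denominators turns this condition into a quadratic in $s$, namely
\begin{equation*}
s^2 + 216\, m^2 (m^2 - 7)\, s - 216000\, m^6 = 0,
\end{equation*}
whose two roots simplify to $108\, m^2(7 - m^2) \pm 12\, m^2 \sqrt{3}\,\sqrt{27(m^4 + 49) + 122\, m^2}$. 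The $+$ sign reproduces exactly the expression for $s$ in \eqref{G343p}, and then $p = 1 + A - B$ is the stated formula.

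The main obstacle, though a modest one, is justifying the sign of the square root and the branch of the cube root. By Vieta the two values of $s$ satisfy $s s' = -216000\, m^6 < 0$, so exactly one is positive; a short check shows that the companion pair is $(A', B') = (-B, -A)$ and therefore produces the same $y$, so the sign choice is harmless. Taking the positive $s$ together with its real cube root produces a real $p$, and the bound $0 < p < 8$ supplied by Lemma~\ref{lemma:u,v,w,p-supp}\ref{p-supp} pins this branch down as the correct (and here unique) real root of the cubic, completing the proof.
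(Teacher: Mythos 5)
Your proposal is correct and follows the paper's proof essentially verbatim: Lemma~\ref{lemma:p} with $n=7$ and $G_7=2^{1/4}$ gives $G_{343}=2^{1/4}p^{-1/7}$, the multiplier $m$ is read off from Theorem~\ref{thm:enigmatic}, and Entry~\ref{entry:Ramanujan}\eqref{phi-to-p} is rearranged into the cubic $p^3-3p^2+(3+5m^2)p-(m^2-1)^2=0$ whose unique real root is \eqref{G343p}. The only difference is that you carry out the Cardano computation and the branch analysis explicitly where the paper just says ``solve and take the only real root''; your details check out.
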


\begin{proof} By taking $q = \exp(-\pi\sqrt{7})$, the expression for $m$ is obtained by Theorem~\ref{thm:enigmatic} as
\begin{equation*}
m = \frac{\varphi^2(q)}{\varphi^2(q^7)} = \frac{\varphi^2(e^{-\pi\sqrt{7}})}{\varphi^2(e^{-7\pi\sqrt{7}})}.
\end{equation*}
For Entry~\ref{entry:Ramanujan}\eqref{def:p}, by Lemma~\ref{lemma:p} with $n = 7$ and with $G_7 = 2^{1/4}$ \cite{Joubert}, \cite{Watson3}, \cite{Watson4}, \cite[p.~189]{BerndtV}, we find that
\begin{equation*}
p = \frac{2\sqrt{2}G_7}{G_{343}^7} = \frac{2\sqrt{2} \cdot 2^{1/4}}{G_{343}^7},
\end{equation*}
from which we have $G_{343} = 2^{1/4}p^{-1/7}$. On the other hand, for Entry~\ref{entry:Ramanujan}\eqref{phi-to-p}, we have
\begin{equation*}
m^4 - (2 + 5p)m^2 + (1 - p)^3 = 0.
\end{equation*}
After rearrangement, the following cubic polynomial in $p$ can be deduced:
\begin{equation*}
p^3 - 3p^2 + (3 + 5m^2)p - (m^2 - 1)^2 = 0.
\end{equation*}
We solve this equation, and choose the only real root, which is given by \eqref{G343p}.
\end{proof}

\section{Examples for Entry~\ref{entry:Ramanujan}\eqref{phi-to-p}}\label{section:examples-1}

The Borwein brothers \cite[p.~145]{BorweinBrothersPiAGM} observed first \cite{BerndtChanZhang2} that class invariants can be used to\linebreak calculate certain values of $\varphi(e^{-n\pi})$. By using ideas from Berndt's proof \cite[p.~347]{BerndtIII} of Entry~1(iii) of Chapter~20 of Ramanujan's second notebook \cite[p.~241]{RamanujanEarlierII}, \cite[p.~345]{BerndtIII}, one can deduce that\linebreak \cite[p.~330, (4.5)]{BerndtV}, \cite[(3.10)]{BerndtChan}
\begin{align}
\frac{\varphi(e^{-3\pi\sqrt{n}})}{\varphi(e^{-\pi\sqrt{n}})} &= \frac{1}{\sqrt{3}}\bigg(1 + \frac{2\sqrt{2}G_{9n}^3}{G_n^9}\bigg)^{1/4}\label{3pisqrtn}.
\intertext{Similarly, as in \cite[p.~339, (8.11)]{BerndtV} and \cite[p.~334, (5.7)]{BerndtV}, we have}
\frac{\varphi(e^{-5\pi\sqrt{n}})}{\varphi(e^{-\pi\sqrt{n}})} &= \frac{1}{\sqrt{5}}\bigg(1 + \frac{2G_{25n}}{G_n^5}\bigg)^{1/2}\label{5pisqrtn}
\intertext{and}
\frac{\varphi(e^{-9\pi\sqrt{n}})}{\varphi(e^{-\pi\sqrt{n}})} &= \frac{1}{3}\bigg(1 + \frac{\sqrt{2}G_{9n}}{G_n^3}\bigg)\label{9pisqrtn}.
\end{align}

There are two groups of values for $\varphi(q)$, which can by deduced from Entry~\ref{entry:Ramanujan}. The first one is from Entry~\ref{entry:Ramanujan}\eqref{phi-to-p}, and the second is from Entry~\ref{entry:Ramanujan}\eqref{1+u+v+w}. Now, in the spirit of Entry~\ref{entry:Ramanujan}\eqref{phi-to-p}, we give a result for $\varphi(e^{-7\pi\sqrt{n}})/\varphi(e^{-\pi\sqrt{n}})$, which is similar to those in \eqref{3pisqrtn}--\eqref{9pisqrtn}, and then we calculate the values of $\varphi(e^{-7\pi}), \varphi(e^{-7\pi\sqrt{3}}), \varphi(e^{-21\pi}),$ and $\varphi(e^{-35\pi})$.

\begin{lemma}\label{lemma:7pisqrtn} If $n$ is a positive rational number, then
\begin{equation*}
\frac{\varphi(e^{-7\pi\sqrt{n}})}{\varphi(e^{-\pi\sqrt{n}})} = \frac{1}{\sqrt{7}}\bigggg(1 + \frac{5\sqrt{2}G_{49n}}{G_n^7} + \frac{1}{2}\sqrt{\bigg(2 + \frac{10\sqrt{2}G_{49n}}{G_n^7}\bigg)^2 - 4\bigg(1 - \frac{2\sqrt{2}G_{49n}}{G_n^7}\bigg)^3}\,\bigggg)^{1/4}.
\end{equation*}
\end{lemma}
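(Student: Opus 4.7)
The plan is to apply Lemma~\ref{lemma:phi-to-p} to a cleverly chosen value of $q$, and then use the transformation formula (Lemma~\ref{lemma:transform}) to restore the desired ratio. The obvious choice $q = \exp(-\pi\sqrt{n})$ would yield, via Lemma~\ref{lemma:p}, the value $p = 2\sqrt{2}\,G_n/G_{49n}^7$, in which the two class invariants sit in the \emph{opposite} configuration from the one in the statement. I will therefore take instead $q = \exp(-\pi/\sqrt{49n})$, so that $q^7 = \exp(-\pi/\sqrt{n})$ and $0 < q < 1$.

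With this choice, two applications of Lemma~\ref{lemma:transform} (one with $49n$, one with $n$) give
\begin{equation*}
\varphi(q) = (49n)^{1/4}\varphi(e^{-7\pi\sqrt{n}}), \qquad \varphi(q^7) = n^{1/4}\varphi(e^{-\pi\sqrt{n}}),
\end{equation*}
so that $\varphi(q)/\varphi(q^7) = \sqrt{7}\,\varphi(e^{-7\pi\sqrt{n}})/\varphi(e^{-\pi\sqrt{n}})$; this is precisely where the prefactor $1/\sqrt{7}$ in the statement will come from. Meanwhile Lemma~\ref{lemma:p} applied with $1/(49n)$ in place of $n$, combined with Lemma~\ref{lemma:G}, produces
\begin{equation*}
p = \frac{2\sqrt{2}\,G_{1/(49n)}}{G_{1/n}^{7}} = \frac{2\sqrt{2}\,G_{49n}}{G_{n}^{7}},
\end{equation*}
exactly the parameter appearing in the statement.

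Finally, since $0 < q < 1$, Lemma~\ref{lemma:phi-to-p} applies and furnishes
\begin{equation*}
\frac{\varphi^4(q)}{\varphi^4(q^7)} = 1 + \frac{5p}{2} + \frac{1}{2}\sqrt{(2+5p)^2 - 4(1-p)^3}.
\end{equation*}
Taking positive fourth roots (all quantities are positive), dividing by $\sqrt{7}$, and substituting $p = 2\sqrt{2}\,G_{49n}/G_n^7$ will then yield the claimed identity by a direct substitution, since $5p/2 = 5\sqrt{2}\,G_{49n}/G_n^7$, $2+5p = 2+10\sqrt{2}\,G_{49n}/G_n^7$, and $1-p = 1-2\sqrt{2}\,G_{49n}/G_n^7$ line up with the formula as written. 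There is no serious obstacle: the entire content of the lemma is the initial choice of $q$, which is rigged precisely so that the transformation formula produces the factor $\sqrt{7}$ and swaps the two class invariants into the configuration required on the right-hand side.
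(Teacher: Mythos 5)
Your proposal is correct and matches the paper's own proof exactly: the paper also takes $q = \exp(-\pi/\sqrt{49n})$, obtains $p = 2\sqrt{2}\,G_{49n}/G_n^7$ from Lemma~\ref{lemma:p} together with Lemma~\ref{lemma:G}, and finishes with Lemma~\ref{lemma:phi-to-p} and Lemma~\ref{lemma:transform}. Your write-up merely spells out the bookkeeping (the factor $\sqrt{7}$ and the inversion of the class invariants) that the paper leaves implicit.
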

For
\begin{equation}\label{7pisqrtn-p}
p = \frac{2\sqrt{2}G_{49n}}{G_n^7},
\end{equation}
we define
\begin{equation}\label{def:mp}
m(p) := \bigg(1 + \frac{5p}{2} + \frac{1}{2}\sqrt{(2 + 5p)^2 - 4(1 - p)^3}\bigg)^{1/2}.
\end{equation}
Using $m(p)$, we can state Lemma~\ref{lemma:7pisqrtn} as
\begin{equation}\label{7pisqrtn-with-m}
\frac{\varphi(e^{-7\pi\sqrt{n}})}{\varphi(e^{-\pi\sqrt{n}})} = \frac{m^{1/2}(p)}{\sqrt{7}}.
\end{equation}
Note that $m(p)$ is a multiplier of degree $7$, defined in \eqref{def:multiplier}, with the substitution $n \mapsto (49n)^{-1}$.

\begin{proof} We apply Entry~\ref{entry:Ramanujan}\eqref{def:p},\eqref{phi-to-p}. For $q = \exp(-\pi/\sqrt{49n})$, by Lemmas~\ref{lemma:p} and \ref{lemma:G}, we find that $p = 2\sqrt{2}G_{49n}/G_n^7$. By using Lemma~\ref{lemma:phi-to-p} with Lemma~\ref{lemma:transform}, we complete the proof.
\end{proof}

The next result is from Ramanujan's first notebook \cite[p.~297]{RamanujanEarlierI}, \cite[p.~328]{BerndtV}, and proved first by Berndt and Chan \cite{BerndtChan}, \cite[pp.~336--337]{BerndtV}. Our proof uses Entry~\ref{entry:Ramanujan}\eqref{phi-to-p}, but all of these proofs depend on some of the modular equations given in Entry~19 of Chapter~19 in Ramanujan's second notebook \cite[p.~240]{RamanujanEarlierII}, \cite[pp.~314--324]{BerndtIII}. The value of $\varphi(e^{-7\pi})$, in terms of $\varphi(e^{-\pi})$ given in \eqref{e-pi}, is stated as follows.

\begin{theorem}\label{thm:e7} We have
\begin{equation*}
\frac{\varphi^2(e^{-7\pi})}{\varphi^2(e^{-\pi})} = \frac{\sqrt{13 + \sqrt{7}} + \sqrt{7 + 3\sqrt{7}}}{14}(28)^{1/8}.
\end{equation*}
\end{theorem}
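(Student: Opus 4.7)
The approach is to specialize Lemma~\ref{lemma:7pisqrtn} to $n = 1$, which directly targets the ratio $\varphi(e^{-7\pi})/\varphi(e^{-\pi})$. The classical evaluation $G_1 = 1$ reduces the parameter to $p = 2\sqrt{2}\,G_{49}$, so the theorem follows once $G_{49}$ is known in closed form and the resulting expression is simplified. Squaring the identity of the lemma gives
\begin{equation*}
\frac{\varphi^2(e^{-7\pi})}{\varphi^2(e^{-\pi})} = \frac{m(p)}{7} = \frac{1}{7}\bigg(1 + \frac{5p}{2} + \frac{1}{2}\sqrt{(2+5p)^2 - 4(1-p)^3}\bigg)^{1/2},
\end{equation*}
so the problem reduces to verifying an algebraic identity in the single variable~$p$.

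For $G_{49}$, I would either cite its known closed form from Ramanujan's first notebook (tabulated in \cite[p.~190]{BerndtV}, or equivalently from Watson \cite{Watson3,Watson4}), or derive it from the degree-$7$ modular equation in Entry~19(ix) of Chapter~19 \cite[p.~315]{BerndtIII} (used also in \cite[Lemma~3.5]{BerndtChanZhang3}): setting $n = 1$ converts that modular equation into a polynomial in $G_{49}$ that can be solved using $G_1 = 1$ and the selection of the appropriate positive real root. The resulting expression for $G_{49}$, and hence for $p$, involves nested radicals built from~$\sqrt{7}$.

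With $p$ made explicit, the rest is pure algebra. Motivated by the shape of the target, I would expect $(2+5p)^2 - 4(1-p)^3$ to simplify to a perfect square in an appropriate extension of $\mathbb{Q}(\sqrt{7})$ once $p$ is substituted, so that the inner radical extracts cleanly, and then for $m(p)$ itself to become a perfect square in a further quadratic extension, allowing the outer radical to extract as well. The intended endpoint is the identity
\begin{equation*}
\frac{m(p)}{7} = \frac{1}{196}\bigl(\sqrt{13+\sqrt{7}} + \sqrt{7+3\sqrt{7}}\,\bigr)^{2}\cdot 28^{1/4},
\end{equation*}
which is the square of the asserted theorem; the correct branch of each square root is pinned down by the positivity of $\varphi^{2}$ and by the choice of root already made in Lemma~\ref{lemma:phi-to-p} (via Lemma~\ref{lemma:u,v,w,p-supp}).

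The main obstacle is the double nested-radical denesting. First one must show that the discriminant under the inner square root becomes a perfect square in a small enough field after substituting the closed form of $p$, and then that the resulting expression $1 + 5p/2 + \tfrac12\sqrt{\,\cdot\,}$ is itself a perfect square. Each step is elementary but bookkeeping-heavy: carrying the class invariant $G_{49}$ through in its structured nested form, rather than expanded, should be essential for the cancellations to be visible.
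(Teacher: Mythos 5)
Your proposal follows essentially the same route as the paper: specialize Lemma~\ref{lemma:7pisqrtn} to $n=1$, use $G_1=1$ and the tabulated $G_{49}=\tfrac12\big(7^{1/4}+\sqrt{4+\sqrt{7}}\big)$ to get $p=\sqrt{7}+\sqrt{2}\cdot 7^{1/4}+1$, and then denest the resulting radicals (the paper carries this out by setting $a=28^{1/4}$ and exploiting two polynomial identities in which the terms with factor $a^4-28$ vanish). One small correction: your stated endpoint should read $\big(m(p)/7\big)^2=\tfrac{1}{196}\big(\sqrt{13+\sqrt{7}}+\sqrt{7+3\sqrt{7}}\big)^2\cdot 28^{1/4}$ (both sides squared), since $m(p)/7$ itself already equals the unsquared right-hand side of the theorem.
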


\begin{proof} We apply Lemma~\ref{lemma:7pisqrtn} with $n = 1$. From \cite[p.~189]{BerndtV}, $G_1 = 1$, and from \cite{RamanujanModularPi}, \cite[p.~26]{RamanujanCollected}, \cite[p.~191]{BerndtV},
\begin{equation}\label{G49}
G_{49} = \frac{7^{1/4} + \sqrt{4 + \sqrt{7}}}{2}.
\end{equation}
Using  $4 + \sqrt{7} = (\sqrt{7} + 1)^2/2$, from \eqref{7pisqrtn-p}, we find that
\begin{equation}\label{e7pi-p}
p = \frac{2\sqrt{2}G_{49}}{G_1^7} = \sqrt{2}\bigg(7^{1/4} + \sqrt{4 + \sqrt{7}}\,\bigg) = \sqrt{7} + \sqrt{2} \cdot 7^{1/4} + 1.
\end{equation}

We make two observations. Let $a$ be a real number, and let 
\begin{equation}\label{e7pi-p_a}
p_a := \frac{(a + 1)^2 + 1}{2} = \frac{1}{2}(a^2 + 2a + 2) = \frac{a^2}{2} + a + 1.
\end{equation}
Then, straightforward algebra shows that
\begin{equation}\label{e7pi-part-i}
(2+5p_a)^2 - 4(1-p_a)^3 = \frac{1}{4}(2a^3 + 3a^2 + 10a +14)^2 + \frac{a^2}{2}(a^4 - 28),
\end{equation}
and for $a \neq 0$,
\begin{equation}\label{e7pi-part-ii}
\Bigg(\sqrt{13 + \frac{a^2}{2}} + \sqrt{7 + \frac{3a^2}{2}}\,\Bigg)^2 =  2a^2 + \sqrt{\bigg(8a + \frac{28}{a}\bigg)^2 + \frac{(3a^2 + 28)(a^4 - 28)}{a^2}} + 20.
\end{equation}

Now, set $a := (28)^{1/4} = \sqrt{2} \cdot 7^{1/4}$. Comparing \eqref{e7pi-p} with \eqref{e7pi-p_a}, we see that $p = p_a$. Furthermore, note in \eqref{e7pi-part-i} and \eqref{e7pi-part-ii} that the terms with a factor of $a^4 - 28$ vanish. Thus, from \eqref{def:mp}--\eqref{e7pi-part-ii}, we find that
\begin{align}
\frac{\varphi^2(e^{-7\pi})}{\varphi^2(e^{-\pi})} = \frac{m(p)}{7} &= \frac{1}{7}\bigg(1 + \frac{5p}{2} + \frac{1}{2}\sqrt{(2 + 5p)^2 - 4(1 - p)^3}\bigg)^{1/2}\notag\\
&= \frac{1}{7}\bigg(1 + \frac{5}{4}(a^2 + 2a + 2) + \frac{1}{4}(2a^3 + 3a^2 + 10a + 14)\bigg)^{1/2}\label{e7pi-m_a^2}\\
&= \frac{1}{7}\bigg(\frac{a}{4}\bigg(2a^2 + 8a + \frac{28}{a} + 20\bigg)\bigg)^{1/2}\notag\\
&= \frac{\sqrt{a}}{14}\Bigg(\sqrt{13 + \frac{a^2}{2}} + \sqrt{7 + \frac{3a^2}{2}}\,\Bigg)\notag\\
&= \frac{\sqrt{13 + \sqrt{7}} + \sqrt{7 + 3\sqrt{7}}}{14}(28)^{1/8}.\tag*{\qedhere}
\end{align}
\end{proof}

The next theorems appear to be new.

\begin{theorem}\label{thm:e7pisqrt3} We have
\begin{equation*}
\frac{\varphi^2(e^{-7\pi\sqrt{3}})}{\varphi^2(e^{-\pi\sqrt{3}})} = \frac{1}{42\sqrt{3}}\Big(\big(\sqrt{21}+3\big)(28)^{1/3} + 8\sqrt{3}(28)^{1/6}+4\sqrt{21} + 6\Big).
\end{equation*}
\end{theorem}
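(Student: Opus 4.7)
My plan is to mirror the proof of Theorem~\ref{thm:e7} using Lemma~\ref{lemma:7pisqrtn} with $n = 3$. Squaring the conclusion \eqref{7pisqrtn-with-m} of that lemma gives
\begin{equation*}
\frac{\varphi^2(e^{-7\pi\sqrt 3})}{\varphi^2(e^{-\pi\sqrt 3})} = \frac{m(p)}{7},
\end{equation*}
with $m(p)$ defined by \eqref{def:mp} and $p = 2\sqrt 2\,G_{147}/G_3^7$ from \eqref{7pisqrtn-p}. Inserting the classical value $G_3 = 2^{1/12}$ immediately reduces this to $p = 2^{11/12}G_{147}$, so the whole calculation hinges on producing $G_{147}$ in closed form.

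Since $147 = 3 \cdot 49$, the natural route to $G_{147}$ is a degree-3 modular equation connecting it to $G_{49}$, whose explicit value is recalled in \eqref{G49}; alternatively a degree-7 modular equation relating $G_3$ to $G_{147}$ would work, but the degree-3 path is cleaner because $G_{49}$ is already available in a tidy nested-radical form. After the resulting polynomial identity is solved for $G_{147}$, the value of $p$ should emerge as an algebraic expression involving $(28)^{1/3}$, $(28)^{1/6}$, and factors of $\sqrt 3$ and $\sqrt 7$, directly matching the surds visible on the right-hand side of the claimed formula.

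With such a $p$ in hand, I would then search for a parametrization analogous to the substitution $a := (28)^{1/4}$ used in \eqref{e7pi-p_a}--\eqref{e7pi-part-ii}. The most plausible candidate, dictated by the exponents in the target, is to set $a$ proportional to $(28)^{1/6}$, possibly multiplied by $\sqrt 3$, so that $a^2$, $a^3$, $a^4$ produce the monomials $(28)^{1/3}$, $2\sqrt 7$, $(28)^{2/3}$ that the algebra requires. Writing $p$ as a polynomial in $a$ and imitating the cancellation observed in \eqref{e7pi-part-i}--\eqref{e7pi-part-ii}, the inner quantity $(2+5p)^2 - 4(1-p)^3$ should reduce to a perfect square, after which $m(p)/7$ collapses to the claimed closed form following the same chain of manipulations as \eqref{e7pi-m_a^2}.

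The main obstacle is this two-step algebraic engine: first, nailing down $G_{147}$ explicitly enough that $p$ takes a tractable form, and second, identifying the correct auxiliary parameter $a$ so that the sextic surd in $m(p)$ simplifies cleanly. Both steps are constrained strongly by the structure of the target expression: the simultaneous appearance of $(28)^{1/3}$ and $(28)^{1/6}$ pins down $a$ essentially up to a $\sqrt 3$ factor, and the three numerical coefficients $\sqrt{21}+3$, $8\sqrt 3$, and $4\sqrt{21}+6$ fix the precise form of the polynomial in $a$. Once the right substitution is in place, the residual work is routine, if laborious, algebra.
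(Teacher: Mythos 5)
Your overall strategy is the paper's: apply Lemma~\ref{lemma:7pisqrtn} with $n=3$, express $p=2\sqrt2\,G_{147}/G_3^7$, and then find a parameter $a$ for which $(2+5p)^2-4(1-p)^3$ collapses to a perfect square. Your guess for the parameter is exactly right --- the paper takes $a:=(756)^{1/6}=\sqrt3\,(28)^{1/6}$, writes $p=p_a=\tfrac{1}{54}(a^4+3a^3+12a^2+18a+90)$, and exhibits an identity in which the obstruction to being a perfect square carries a factor $a^6-756$ that vanishes at this value, so that $m(p)=m_a$ and the claimed closed form follows.

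The one concrete misstep is your route to $G_{147}$. The paper does not derive it; it cites the classical value
\begin{equation*}
G_{147} = 2^{1/12}\bigg(\frac{1}{2} + \frac{1}{\sqrt{3}}\bigg\{\sqrt{\tfrac{7}{4}} - (28)^{1/6}\bigg\}\bigg)^{-1}
\end{equation*}
from Ramanujan and Watson. Your proposed derivation via ``a degree-3 modular equation connecting $G_{147}$ to $G_{49}$'' is not available: in this framework a modular equation of degree $d$ relates $q=e^{-\pi\sqrt{n}}$ to $q^d=e^{-\pi\sqrt{d^2n}}$, hence $G_n$ to $G_{d^2n}$. Since $147/49=3$ is not a perfect square, no such equation links $G_{49}$ and $G_{147}$; a degree-3 equation starting from $G_{49}$ produces $G_{441}$ instead. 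The workable alternatives are a degree-7 modular equation linking $G_3=2^{1/12}$ to $G_{147}$ (the analogue of what Watson did for $G_{343}$), or simply quoting the known value as the paper does. With that repaired, the rest of your plan matches the paper's proof, though the ``routine'' algebra is heavy: the paper's verification involves a degree-14 auxiliary polynomial in $a$.
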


\begin{proof} We apply Lemma~\ref{lemma:7pisqrtn} with $n = 3$. From \cite{Watson3}, \cite[p.~189]{BerndtV}, $G_3 = 2^{1/12}$, and from \cite{RamanujanModularPi},\linebreak \cite[p.~28]{RamanujanCollected}, \cite{Watson5}, \cite[p.~194]{BerndtV},
\begin{equation*}
G_{147} = 2^{1/12}\bigg(\frac{1}{2} + \frac{1}{\sqrt{3}}\bigg\{\sqrt{\frac{7}{4}} - (28)^{1/6}\bigg\}\bigg)^{-1}.
\end{equation*}
Thus, from \eqref{7pisqrtn-p}, we find that
\begin{equation}\label{e7pisqrt3-p}
p = \frac{2\sqrt{2}G_{147}}{G_3^7} = 2\bigg(\frac{1}{2} + \frac{1}{\sqrt{3}}\bigg\{\sqrt{\frac{7}{4}} - (28)^{1/6}\bigg\}\bigg)^{-1}.
\end{equation}

The next observation follows by elementary algebra. Let $a$ be a real number, and let
\begin{align}
p_a &:= \frac{1}{54}(a^4 + 3a^3 + 12a^2 + 18a + 90)\label{e7pisqrt3-p_a}\\
\intertext{and}
m_a &:=  \frac{1}{6\sqrt{3}}\bigg(\frac{a}{18}(a^2 + 6)^2 + a(a + 6) + 6\bigg)\label{e7pisqrt3-m_a}.
\end{align}
Then
\begin{equation}\label{e7pisqrt3-i}
(2+5p_a)^2 - 4(1-p_a)^3 = 4\bigg(m_a^2 - 1 - \frac{5p_a}{2}\bigg)^2 - \frac{(a^6 - 756)P}{306110016},
\end{equation}
where
\begin{multline*}
P = a^{14} + 48a^{12} + 72a^{11} + 1440a^{10} + 3024a^9 + 27108a^8 + 68040a^7 + 375840a^6 \\+ 843696a^5 + 3005424a^4 + 5762016a^3 + 13576896a^2 + 15536448a + 5878656.
\end{multline*}

Now, set $a := (756)^{1/6} = 2^{1/3} \cdot \sqrt{3} \cdot 7^{1/6}$. A straightforward calculation shows that
\begin{equation*}
(a^4 + 3a^3 + 12a^2 + 18a + 90)\bigg(\frac{1}{2} + \frac{1}{\sqrt{3}}\bigg\{\sqrt{\frac{7}{4}} - (28)^{1/6}\bigg\}\bigg) = 108.
\end{equation*}
Thus, by comparing \eqref{e7pisqrt3-p} with \eqref{e7pisqrt3-p_a}, we see that $p = p_a$. Furthermore, note in \eqref{e7pisqrt3-i} that the term with a factor of $a^6 -756$ vanishes. Thus, from \eqref{def:mp}, \eqref{7pisqrtn-with-m}, and \eqref{e7pisqrt3-p}--\eqref{e7pisqrt3-i}, with some simplification, we find that
\begin{align*}
\frac{\varphi^2(e^{-7\pi\sqrt{3}})}{\varphi^2(e^{-\pi\sqrt{3}})} = \frac{m(p)}{7} = \frac{m_a}{7} &= \frac{1}{42\sqrt{3}}\bigg(\frac{a}{18}(a^2 + 6)^2 + a(a + 6) + 6\bigg)\\
&=  \frac{1}{42\sqrt{3}}\Big(\big(\sqrt{21}+3\big)(28)^{1/3} + 8\sqrt{3}(28)^{1/6}+4\sqrt{21} + 6\Big).\tag*{\qedhere}
\end{align*}
\end{proof}

By combining the value \eqref{e-sqrt3} and Theorem~\ref{thm:e7pisqrt3}, we find the evaluation of $\varphi(e^{-7\pi\sqrt{3}})$, i.e.,
\begin{equation*}
\varphi(e^{-7\pi\sqrt{3}}) = \frac{\Gamma^{3/2}\big(\frac{1}{3}\big)}{2^{7/6} 3^{5/8} \sqrt{7} \pi}\Big(\big(\sqrt{21}+3\big)(28)^{1/3} + 8\sqrt{3}(28)^{1/6}+4\sqrt{21} + 6\Big)^{1/2}.
\end{equation*}
The next values are derived in terms of $\varphi(e^{-\pi})$ given in \eqref{e-pi}.

\begin{theorem}\label{thm:e21} We have
\begin{equation*}
\frac{\varphi(e^{-21\pi})}{\varphi(e^{-\pi})} = \bigg(\frac{m(p)}{7(6\sqrt{3} - 9)^{1/2}}\bigg)^{1/2},
\end{equation*}
where
\begin{equation}\label{e21-p}
p = \sqrt{2}\big(2-\sqrt{3}\big)\sqrt{\sqrt{3} + \sqrt{7}}\sqrt{2 + \sqrt{7} + \sqrt{7 + 4\sqrt{7}}}\sqrt{\frac{\sqrt{3 + \sqrt{7}} + (6\sqrt{7})^{1/4}}{\sqrt{3 + \sqrt{7}} - (6\sqrt{7})^{1/4}}}
\end{equation}
and $m(p)$ is given in \eqref{def:mp}.
\end{theorem}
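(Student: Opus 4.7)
The plan is to split the ratio as
\begin{equation*}
\frac{\varphi(e^{-21\pi})}{\varphi(e^{-\pi})} = \frac{\varphi(e^{-21\pi})}{\varphi(e^{-3\pi})} \cdot \frac{\varphi(e^{-3\pi})}{\varphi(e^{-\pi})},
\end{equation*}
evaluate each factor by the apparatus already assembled in the paper, and then multiply. This matches the shape of the claimed answer, in which $m(p)/7$ (a degree-$7$ piece) appears alongside a factor that is independent of $p$ and depends only on $G_9$ (a degree-$3$ piece).

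For the first factor I will apply Lemma~\ref{lemma:7pisqrtn} with $n=9$. By \eqref{7pisqrtn-p}--\eqref{7pisqrtn-with-m} this gives
\begin{equation*}
\frac{\varphi^2(e^{-21\pi})}{\varphi^2(e^{-3\pi})} = \frac{m(p)}{7}, \qquad p = \frac{2\sqrt{2}\,G_{441}}{G_9^{7}},
\end{equation*}
with $m(p)$ the multiplier defined in \eqref{def:mp}. For the second factor I will square \eqref{3pisqrtn} at $n=1$ and use $G_1=1$ together with the classical evaluation $G_9^{\,3}=(1+\sqrt{3})/\sqrt{2}$ (see e.g.\ \cite[p.~189]{BerndtV}). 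Straightforward rationalisation then yields
\begin{equation*}
\frac{\varphi^{2}(e^{-3\pi})}{\varphi^{2}(e^{-\pi})} = \frac{1}{3}\sqrt{1+2\sqrt{2}\,G_{9}^{3}} = \frac{1}{3}\sqrt{\,3+2\sqrt{3}\,} = \frac{1}{\sqrt{6\sqrt{3}-9}}.
\end{equation*}
Multiplying the two evaluated factors and extracting a square root already produces the outer shape of the asserted identity; what remains is to identify $p$ with the nested-radical expression in~\eqref{e21-p}.

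The main obstacle is precisely this last step, namely pinning down $G_{441}$ in closed form and simplifying $p = 2\sqrt{2}\,G_{441}/G_9^{7}$ to \eqref{e21-p}. I would obtain $G_{441}$ by invoking a degree-$7$ modular equation from Entry~19 of Chapter~19 of Ramanujan's second notebook \cite[p.~240]{RamanujanEarlierII}, \cite[p.~314--324]{BerndtIII} that relates $G_9$ and $G_{441}$, using the known values $G_9$ and $G_{49}$ (the latter given in \eqref{G49}) to reduce the modular equation to a solvable polynomial in a single unknown. Once a usable form of $G_{441}$ is in hand, the simplification proceeds by the same style of manoeuvre used in the proofs of Theorems~\ref{thm:e7} and~\ref{thm:e7pisqrt3}: introduce an auxiliary parameter, exploit algebraic identities like $\sqrt{4+\sqrt{7}} = (\sqrt{7}+1)/\sqrt{2}$ to collapse surds, and rationalise conjugate factors to expose the product structure $\sqrt{\sqrt{3}+\sqrt{7}}\cdot\sqrt{2+\sqrt{7}+\sqrt{7+4\sqrt{7}}}$ together with the final conjugate-ratio factor in \eqref{e21-p}. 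Combining these pieces with the $1/\sqrt{6\sqrt{3}-9}$ factor above completes the proof.
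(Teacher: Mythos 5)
Your overall route is the same as the paper's: split off the degree-$3$ factor, evaluate $\varphi^2(e^{-21\pi})/\varphi^2(e^{-3\pi}) = m(p)/7$ by Lemma~\ref{lemma:7pisqrtn} with $n=9$, and evaluate $\varphi^2(e^{-3\pi})/\varphi^2(e^{-\pi}) = (6\sqrt{3}-9)^{-1/2}$. (The paper quotes the latter as the known value \eqref{e3} rather than re-deriving it from \eqref{3pisqrtn}, but your computation $\tfrac{1}{3}\sqrt{3+2\sqrt{3}} = (6\sqrt{3}-9)^{-1/2}$ is correct.) Multiplying and extracting a square root gives the stated outer form exactly as in the paper.

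The genuine gap is the closed form of $G_{441}$, which you correctly flag as the main obstacle but do not overcome. The paper does not derive $G_{441}$ at all: it is one of Ramanujan's recorded class invariants, quoted from \cite{RamanujanModularPi}, \cite{BerndtChanZhang}, \cite[p.~197]{BerndtV} as \eqref{G441}; with that value in hand, \eqref{e21-p} drops out of $p = 2\sqrt{2}\,G_{441}/G_9^7$ via the single constant identity $2^{5/3}(2+\sqrt{3})^{1/6}/(1+\sqrt{3})^{7/3} = \sqrt{2}(2-\sqrt{3})$. Your proposed substitute --- extracting $G_{441}$ from a degree-$7$ modular equation of Entry~19 of Chapter~19, ``using $G_9$ and $G_{49}$ to reduce it to a solvable polynomial'' --- is not a workable sketch as stated: the degree-$7$ relation links $G_9$ to $G_{441}$ through a high-degree polynomial whose explicit solution is precisely the hard part (compare Watson's septic $x^7-7x^6-7x^5-7x^4-1=0$ for $G_{343}$ in Section~\ref{section:G343}), and $G_{49}$ does not enter that relation (it would connect to $G_{441}$ only through a separate degree-$3$ equation). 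The established derivations of $G_{441}$ in \cite{BerndtChanZhang} rest on Kronecker's limit formula, not on solving a modular equation directly. So either cite the known value, as the paper does, or supply an actual derivation; the present sketch does not produce the nested radical in \eqref{e21-p}.
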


\begin{proof} We apply Lemma~\ref{lemma:7pisqrtn} with $n = 9$. From \cite{RamanujanModularPi}, \cite[p.~24]{RamanujanCollected}, \cite[p.~189]{BerndtV},
\begin{equation*}
G_9 = \Bigg(\frac{1 + \sqrt{3}}{\sqrt{2}}\Bigg)^{1/3},
\end{equation*}
and from \cite{RamanujanModularPi}, \cite[p.~29]{RamanujanCollected}, \cite{BerndtChanZhang}, \cite[p.~197]{BerndtV},
\begin{equation}\label{G441}
G_{441} = \sqrt{\frac{\sqrt{3} + \sqrt{7}}{2}}\big(2 + \sqrt{3}\big)^{1/6}\sqrt{\frac{2 + \sqrt{7} + \sqrt{7 + 4\sqrt{7}}}{2}}\sqrt{\frac{\sqrt{3 + \sqrt{7}} + (6\sqrt{7})^{1/4}}{\sqrt{3 + \sqrt{7}} - (6\sqrt{7})^{1/4}}}.
\end{equation}
An elementary calculation shows that
\begin{equation*}
\frac{2^{5/3}(2+\sqrt{3})^{1/6}}{(1+\sqrt{3})^{7/3}} = \sqrt{2}\big(2 - \sqrt{3}\big).
\end{equation*}
From \eqref{7pisqrtn-p}, $p = 2\sqrt{2}G_{441}/G_9^7$. Using the values for $G_9$ and $G_{441}$ given above, we deduce \eqref{e21-p}.

From \cite[p.~284]{RamanujanEarlierI}, \cite{BerndtChan}, \cite[pp.~327, 329--331]{BerndtV},
\begin{equation}\label{e3}
\frac{\varphi(e^{-3\pi})}{\varphi(e^{-\pi})} = \frac{1}{(6\sqrt{3} - 9)^{1/4}}.
\end{equation}
Combining Lemma~\ref{lemma:7pisqrtn} and \eqref{e3} completes the proof.
\end{proof}

Another expression for $\varphi(e^{-21\pi})$ can be obtained by using \eqref{3pisqrtn} with $n = 49$ and with the values $G_{441}$ from \eqref{G441}, and $G_{49}$ from \eqref{G49}. Combining the result with the value of $\varphi(e^{-7\pi})$ from Theorem~\ref{thm:e7}, after some simplification we find that
\begin{multline*}
\frac{\varphi(e^{-21\pi})}{\varphi(e^{-\pi})} = \biggg(\frac{\sqrt{13 + \sqrt{7}} + \sqrt{7 + 3\sqrt{7}}}{42}\biggg)^{1/2}(28)^{1/16}\\
\times \biggg\{1 + \frac{1}{4}\sqrt{2}\sqrt{2 + \sqrt{3}}\biggg(\big(\sqrt{3} + \sqrt{7}\big)\bigg(2 + \sqrt{7} + \sqrt{7 + 4\sqrt{7}}\bigg)\\
\times \bigg(22 + 8\sqrt{7} - \frac{1}{2}\big(19 + 7\sqrt{7}\big)\sqrt{2\sqrt{7}}\bigg)\frac{\sqrt{3 + \sqrt{7}} + \big(6\sqrt{7}\big)^{1/4}}{\sqrt{3 + \sqrt{7}} - \big(6\sqrt{7}\big)^{1/4}}\biggg)^{3/2}\biggg\}^{1/4}.
\end{multline*}

\begin{theorem}\label{thm:e35} We have
\begin{equation*}
\frac{\varphi(e^{-35\pi})}{\varphi(e^{-\pi})} = \bigg(\frac{m(p)}{35(\sqrt{5} - 2)}\bigg)^{1/2},
\end{equation*}
where
\begin{multline}\label{e35-p}
p = \frac{1}{4}\big(9 - 4\sqrt{5}\big)\sqrt{\sqrt{14} + \sqrt{10}}\bigg(7^{1/4}+\sqrt{4 + \sqrt{7}}\,\bigg)^{3/2}\\
\times \Bigg(\sqrt{43 + 15\sqrt{7} + (8 + 3\sqrt{7})\sqrt{10\sqrt{7}}} + \sqrt{35 + 15\sqrt{7} + (8 + 3\sqrt{7})\sqrt{10\sqrt{7}}}\,\Bigg)
\end{multline}
and $m(p)$ is given in \eqref{def:mp}.
\end{theorem}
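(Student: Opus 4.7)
The plan is to mirror the proof of Theorem~\ref{thm:e21}, factoring along a ``fifth-root'' intermediate:
\begin{equation*}
\frac{\varphi(e^{-35\pi})}{\varphi(e^{-\pi})} = \frac{\varphi(e^{-7\pi\sqrt{25}})}{\varphi(e^{-\pi\sqrt{25}})}\cdot\frac{\varphi(e^{-5\pi})}{\varphi(e^{-\pi})}.
\end{equation*}
First, apply Lemma~\ref{lemma:7pisqrtn} with $n = 25$ to the first ratio, which equals $m^{1/2}(p)/\sqrt{7}$ for $p = 2\sqrt{2}\,G_{1225}/G_{25}^7$ by~\eqref{7pisqrtn-p}. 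For the second ratio, invoke \eqref{5pisqrtn} with $n = 1$ together with $G_1 = 1$ and the classical value $G_{25} = (1+\sqrt{5})/2$. This gives $2G_{25}/G_1^5 = 1 + \sqrt{5}$, hence
\begin{equation*}
\frac{\varphi^2(e^{-5\pi})}{\varphi^2(e^{-\pi})} = \frac{2+\sqrt{5}}{5} = \frac{1}{5(\sqrt{5}-2)},
\end{equation*}
since $(2+\sqrt{5})(\sqrt{5}-2) = 1$. Multiplying the two contributions yields
\begin{equation*}
\frac{\varphi^2(e^{-35\pi})}{\varphi^2(e^{-\pi})} = \frac{m(p)}{35(\sqrt{5}-2)},
\end{equation*}
which is the claimed identity, provided $p$ is shown to agree with the expression in~\eqref{e35-p}.

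The remaining task is to identify $p = 2\sqrt{2}\,G_{1225}/G_{25}^7$ with the right-hand side of~\eqref{e35-p}. I would extract the factor $G_{25}^{-6} = 9-4\sqrt{5}$, easily obtained from $G_{25}^{-1} = (\sqrt{5}-1)/2$ via $G_{25}^{-2} = (3-\sqrt{5})/2$ and $G_{25}^{-4} = (7-3\sqrt{5})/2$, and then write
\begin{equation*}
p = (9-4\sqrt{5})\cdot\frac{2\sqrt{2}\,G_{1225}}{G_{25}}.
\end{equation*}
The factor $(7^{1/4}+\sqrt{4+\sqrt{7}})^{3/2}$ appearing in~\eqref{e35-p} signals a contribution of $(2G_{49})^{3/2}$ through $G_{49} = (7^{1/4}+\sqrt{4+\sqrt{7}})/2$ from~\eqref{G49}; such decompositions of $G_{49n}$ in terms of $G_{49}$ and auxiliary radicals are typical at composite discriminants and were already exploited in Theorems~\ref{thm:e7} and~\ref{thm:e21}.

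The main obstacle is the explicit closed form for the class invariant $G_{1225}$, which is not established earlier in the paper. I would either invoke it from the literature of Ramanujan, Watson, or Berndt--Chan--Zhang (in the spirit of \cite{BerndtChanZhang}), or derive it via a modular equation of degree~$7$ or~$25$ relating $G_{25}$, $G_{49}$, and $G_{1225}$. Once this closed form is in hand, the reduction to the nested-radical expression in~\eqref{e35-p}, including the factor $\sqrt{\sqrt{14}+\sqrt{10}}$ and the sum $\sqrt{43+15\sqrt{7}+(8+3\sqrt{7})\sqrt{10\sqrt{7}}}+\sqrt{35+15\sqrt{7}+(8+3\sqrt{7})\sqrt{10\sqrt{7}}}$, becomes elementary field arithmetic in a small radical extension of~$\Q$, analogous to but somewhat more intricate than the simplifications performed in Theorems~\ref{thm:e7}, \ref{thm:e7pisqrt3}, and \ref{thm:e21}.
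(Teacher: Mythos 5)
Your proposal is correct and follows essentially the same route as the paper: Lemma~\ref{lemma:7pisqrtn} with $n=25$, the value $\varphi^2(e^{-5\pi})/\varphi^2(e^{-\pi})=1/(5(\sqrt{5}-2))$ (which the paper cites from Ramanujan's notebook as \eqref{e5} rather than rederiving via \eqref{5pisqrtn}), the computation $G_{25}^{-6}=9-4\sqrt{5}$, and substitution of the explicit $G_{1225}$. The one piece you defer --- the closed form of $G_{1225}$ --- is handled in the paper exactly as you anticipated, by quoting it from Ramanujan, Watson, and Berndt's tables (together with Watson's identity $(6+\sqrt{35})^{1/4}=\sqrt{(\sqrt{14}+\sqrt{10})/2}$, which produces the factor $\sqrt{\sqrt{14}+\sqrt{10}}$ you flagged).
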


\begin{proof} We apply Lemma~\ref{lemma:7pisqrtn} with $n = 25$. From \cite{RamanujanModularPi}, \cite[p.~26]{RamanujanCollected}, \cite[p.~190]{BerndtV},
\begin{equation*}
G_{25} = \frac{1 + \sqrt{5}}{2},
\end{equation*}
and from \cite{RamanujanModularPi}, \cite[p.~30]{RamanujanCollected}, \cite{WatsonII}, \cite[p.~199]{BerndtV},
\begin{multline}\label{G1225}
G_{1225} = \frac{1 + \sqrt{5}}{2}\big(6 + \sqrt{35}\big)^{1/4}\biggg(\frac{7^{1/4} + \sqrt{4 + \sqrt{7}}}{2}\biggg)^{3/2}\\
\times \Biggggg(\sqrt{\frac{43 + 15\sqrt{7} + (8 + 3\sqrt{7})\sqrt{10\sqrt{7}}}{8}} + \sqrt{\frac{35 + 15\sqrt{7} + (8 + 3\sqrt{7})\sqrt{10\sqrt{7}}}{8}}\,\Biggggg).
\end{multline}
A simple calculation shows that
\begin{equation*}
\frac{1}{G_{25}^6} = \bigg(\frac{2}{1 + \sqrt{5}}\bigg)^6 = 9 - 4\sqrt{5}.
\end{equation*}
We use
\begin{equation*}
\big(6+\sqrt{35}\big)^{1/4} = \sqrt{\frac{\sqrt{14} + \sqrt{10}}{2}},
\end{equation*}
as it is given by Watson \cite{WatsonII}, which also can be verified directly. From \eqref{7pisqrtn-p}, $p = 2\sqrt{2}G_{1225}/G_{25}^7$. Using the values for $G_{25}$ and $G_{1225}$ given above, we deduce \eqref{e35-p}.

From \cite[p.~285]{RamanujanEarlierI}, \cite{BerndtChan}, \cite[pp.~327--329]{BerndtV},
\begin{equation}\label{e5}
\frac{\varphi(e^{-5\pi})}{\varphi(e^{-\pi})} = \frac{1}{(5\sqrt{5} - 10)^{1/2}}.
\end{equation}
Combining Lemma~\ref{lemma:7pisqrtn} and \eqref{e5} completes the proof.
\end{proof}

Another expression for $\varphi(e^{-35\pi})$ can be obtained by using \eqref{5pisqrtn} with $n = 49$ and with the values $G_{1225}$ from \eqref{G1225}, and $G_{49}$ from \eqref{G49}. Combining the result with the value of $\varphi(e^{-7\pi})$ from Theorem~\ref{thm:e7}, after some simplification we find that
\begin{multline*}
\frac{\varphi(e^{-35\pi})}{\varphi(e^{-\pi})} = \biggg(\frac{\sqrt{13 + \sqrt{7}} + \sqrt{7 + 3\sqrt{7}}}{70}\biggg)^{1/2}(28)^{1/16}\\
\times \Bigg\{1 + \frac{1}{4}\big(1 + \sqrt{5}\big)\sqrt{\sqrt{7} + \sqrt{5}}\bigg(16466 + 6223\sqrt{7} - \frac{7}{2}\big(2045 + 773\sqrt{7}\big)\sqrt{2\sqrt{7}}\bigg)^{1/4}\\
\times \Bigg(\sqrt{43 + 15\sqrt{7} + \big(8 + 3\sqrt{7}\big)\sqrt{10\sqrt{7}}} + \sqrt{35 + 15\sqrt{7} + \big(8 + 3\sqrt{7}\big)\sqrt{10\sqrt{7}}}\Bigg)\Bigg\}^{1/2}.
\end{multline*}

\section{Examples for Entry~\ref{entry:Ramanujan}\eqref{1+u+v+w}}\label{section:examples-2}

Now, we review our proof for Theorem~\ref{thm:enigmatic}, which is in the form of Entry~\ref{entry:Ramanujan}\eqref{1+u+v+w}. Then, we give the value of $\varphi(e^{-49\pi})$, as a second illustration of Entry~\ref{entry:Ramanujan}\eqref{1+u+v+w}. We use the results from the proof of Theorem~\ref{thm:e7}.

Let $a \in \{0, (28)^{1/4}\}$. After \eqref{e7pi-p_a}, let
\begin{equation}\label{e49-p_a}
p_a := \frac{(a + 1)^2 + 1}{2} = \frac{1}{2}(a^2 + 2a + 2).
\end{equation}
Since the second term on the right-hand side of \eqref{e7pi-part-i} vanishes for $a \in \{0, (28)^{1/4}\}$, after \eqref{e7pi-m_a^2}, let
\begin{equation}\label{e49-m_a}
m_a := \bigg(\frac{1}{2}(a^3 + 4a^2 + 10a + 14)\bigg)^{1/2}.
\end{equation}
Furthermore, let
\begin{multline}
r_a(\xi) := \xi^3 + 2\xi^2(1 + 3p_a - m_a^2) + \xi p_a^2(p_a+4) - p_a^4\\
 = \xi^3 - (a^3 + a^2 + 4a + 6)\xi^2 + \frac{1}{8}(a^2 + 2a + 2)^2(a^2 + 2a + 10)\xi - \frac{1}{16}(a^2 + 2a + 2)^4\label{e49-r_a}.
\end{multline}

\begin{proof}[Comments on the proof of Theorem~\ref{thm:enigmatic}] Set $a := 0$. Using the notations of the proof of Theorem~\ref{thm:enigmatic}, from \eqref{e49-p_a}, \eqref{e49-m_a}, and \eqref{e49-r_a}, we find that $p = p_a = 1$, $\varphi^4(q)/\varphi^4(q^7) = m_a^2 = 7$, and
\begin{equation*}
r(\xi) = r_a(\xi) = \xi^3 - 6\xi^2 + 5\xi - 1.
\end{equation*}

By Lemma~\ref{lemma:roots}, we know that $\cos(\pi/7)$ is a root of $U_6$. Thus, by using the power-reduction formula \cite[p.~32]{GradshteynRyzhik7}
\begin{equation*}
\cos^2\bigg(\frac{\pi}{7}\bigg) = \frac{1}{2}\bigg(\cos\bigg(\frac{2\pi}{7}\bigg) + 1\bigg),
\end{equation*}
we can factor $r$ over $\mathbb{Q}(\cos(\pi/7))$ as
\begin{equation}\label{e7sqrt7-factor}
r(\xi) = (\xi - \alpha)(\xi - \beta)(\xi - \gamma),
\end{equation}
where
\begin{align*}
\alpha = 2 + 2\cos\bigg(\frac{\pi}{7}\bigg) + 2\cos\bigg(\frac{2\pi}{7}\bigg),\\
\beta = 3 - 4\cos\bigg(\frac{\pi}{7}\bigg) + 2\cos\bigg(\frac{2\pi}{7}\bigg),\\
\gamma = 1 + 2\cos\bigg(\frac{\pi}{7}\bigg) - 4\cos\bigg(\frac{2\pi}{7}\bigg).
\end{align*}

In the same manner as in the proof of Lemma~\ref{lemma:trig}, we deduce
\begin{equation*}
(\alpha , \beta, \gamma) = \Bigg(\frac{1}{(2\cos\frac{3\pi}{7})^2}, \frac{1}{(2\cos\frac{2\pi}{7})^2}, \frac{1}{(2\cos\frac{\pi}{7})^2}\Bigg),
\end{equation*}
where the order of the roots is determined by Lemmas~\ref{lemma:a,b,c-order} and \ref{lemma:trig}. We construct $u, v,$ and $w$, and the proof is complete.
\end{proof}

After these preliminaries, we derive the value of $\varphi(e^{-49\pi})$. The corresponding polynomial $r$, and its roots are more complicated than in Ramanujan's example in Entry~\ref{entry:Ramanujan}\eqref{enigmatic}. We used \emph{Mathematica} for polynomial factorization and numerical evaluations.

\begin{theorem}\label{thm:e49} We have
\begin{align}\label{e49-1+u+v+w}
\frac{\varphi(e^{-49\pi})}{\varphi(e^{-\pi})} = \frac{1}{7}(1 + u + v + w),
\end{align}
where
\begin{equation*}
u = \bigg(\frac{\alpha^2 p}{\beta}\bigg)^{1/7}, \quad
v = \bigg(\frac{\beta^2 p}{\gamma}\bigg)^{1/7}, \quad
w = \bigg(\frac{\gamma^2 p}{\alpha}\bigg)^{1/7},
\end{equation*}
and
\begin{align*}
p &= \sqrt{7} + \sqrt{2}\cdot 7^{1/4} + 1,\\
\alpha &= \frac{1}{\sqrt{7}}\bigg\{\frac{2}{3}\big(\sqrt{7} + 2\big)\big(5 + 3\sqrt{2}\cdot7^{1/4} - \sqrt{7}\big)
+ 2\big(\sqrt{7} - 1\big)\big(1 - \sqrt{2}\cdot7^{1/4} + \sqrt{7}\big)\cos\bigg(\frac{\pi}{7}\bigg)\\
&\qquad\qquad + \frac{2}{3}\big(\sqrt{7} - 1\big)\big(3\sqrt{2}\cdot 7^{1/4} - \sqrt{7} - 1\big)\cos\bigg(\frac{2\pi}{7}\bigg)\bigg\},\\
\beta &= \frac{1}{\sqrt{7}}\bigg\{\frac{1}{9}\big(\sqrt{7} + 5\big)\big(13 + 9\sqrt{2}\cdot 7^{1/4} + \sqrt{7}\big)\\
&\qquad\qquad - 8\cos\bigg(\frac{\pi}{7}\bigg) + 2\big(\sqrt{7} - 1\big)\big(1 - \sqrt{2}\cdot 7^{1/4} + \sqrt{7}\big)\cos\bigg(\frac{2\pi}{7}\bigg)\bigg\},\\
\gamma &= \frac{1}{\sqrt{7}}\bigg\{\frac{1}{3}\big(\sqrt{7} + 5\big)\big(1 + 3\sqrt{2}\cdot 7^{1/4} + \sqrt{7}\big)\\
&\qquad\qquad + \frac{2}{3}\big(\sqrt{7} - 1\big)\big(3\sqrt{2}\cdot 7^{1/4} - \sqrt{7} - 1\big)\cos\bigg(\frac{\pi}{7}\bigg) - 8\cos\bigg(\frac{2\pi}{7}\bigg)\bigg\}.
\end{align*}
\end{theorem}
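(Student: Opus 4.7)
The plan is to apply Entry~\ref{entry:Ramanujan} with $q = e^{-\pi/7}$, so that $q^{1/7} = e^{-\pi/49}$ and $q^7 = e^{-\pi}$. By Lemma~\ref{lemma:transform} applied with $n = 2401$, one has $\varphi(e^{-\pi/49}) = 7\,\varphi(e^{-49\pi})$, so Entry~\ref{entry:Ramanujan}\eqref{1+u+v+w} becomes
\[
7\,\frac{\varphi(e^{-49\pi})}{\varphi(e^{-\pi})} = 1 + u + v + w,
\]
which rearranges into \eqref{e49-1+u+v+w}. It then remains to identify $p$, to select the correct root of Entry~\ref{entry:Ramanujan}\eqref{phi-to-p}, to factor the cubic $r$ of Entry~\ref{entry:Ramanujan}\eqref{r}, and to order its roots in the sense of Lemma~\ref{lemma:a,b,c-order}.

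Setting $a := 28^{1/4} = \sqrt{2}\cdot 7^{1/4}$, I would compute $p$ via Lemma~\ref{lemma:p} with $n = 1/49$, rewriting $G_{1/49} = G_{49}$ through Lemma~\ref{lemma:G} and invoking $G_1 = 1$ together with the known value \eqref{G49}. The same simplification as in \eqref{e7pi-p} then gives $p = \sqrt{7} + \sqrt{2}\cdot 7^{1/4} + 1$, i.e., $p = p_a$ in the sense of \eqref{e49-p_a}. For the multiplier, Lemma~\ref{lemma:phi-to-p} combined with the algebraic identity \eqref{e7pi-part-i} (whose $a^4 - 28$ correction vanishes precisely at $a = 28^{1/4}$) yields $\varphi^4(q)/\varphi^4(q^7) = m_a^2 = (a^3 + 4a^2 + 10a + 14)/2$. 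Plugging these values of $p$ and of $m_a^2$ into Entry~\ref{entry:Ramanujan}\eqref{r} produces the cubic $r_a(\xi)$ of \eqref{e49-r_a}.

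The heart of the argument is the explicit factorization of $r_a$. Guided by the factorization \eqref{e7sqrt7-factor} of $r_0$ in the revisited proof of Theorem~\ref{thm:enigmatic} preceding the statement, I would search for roots of the form $A + B\cos(\pi/7) + C\cos(2\pi/7)$ with $A, B, C \in \mathbb{Q}(\sqrt{2}, 7^{1/4})$. Carrying out this factorization over the appropriate algebraic number field in \emph{Mathematica} (as the author acknowledges) produces the three expressions $\alpha, \beta, \gamma$ displayed in the theorem; verifying $r_a(\alpha) = r_a(\beta) = r_a(\gamma) = 0$ then reduces to a routine polynomial manipulation modulo the minimal polynomial of $\cos(\pi/7)$, equivalently $U_6(\cos(\pi/7)) = 0$.

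Finally, I would verify that the stated triple satisfies Lemma~\ref{lemma:a,b,c-order}. Condition \eqref{a,b,c-order-cond2} is an immediate numerical inequality, since $\alpha$, $\beta$, and $\gamma$ are all positive. Condition \eqref{a,b,c-order-cond1} --- the analog of Lemma~\ref{lemma:trig} in this setting --- clears denominators to a polynomial identity in the $\cos(k\pi/7)$ with coefficients in $\mathbb{Q}(\sqrt{2}, 7^{1/4})$, again verifiable by reduction modulo $U_6$. Lemma~\ref{lemma:a,b,c-order} then delivers $u$, $v$, and $w$ in the claimed form, completing the proof. The main obstacle, just as the author notes, is the explicit factorization of $r_a$ over a compositum of $\mathbb{Q}(\sqrt{2}, 7^{1/4})$ with the totally real cubic subfield of $\mathbb{Q}(\zeta_7)$, together with the verification of Lemma~\ref{lemma:a,b,c-order}\eqref{a,b,c-order-cond1}; both are routine for a CAS but would be prohibitive by hand.
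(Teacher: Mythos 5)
Your proposal is correct and follows essentially the same route as the paper: rewrite Entry~\ref{entry:Ramanujan}\eqref{1+u+v+w} via Lemma~\ref{lemma:transform}, identify $p = p_a$ and $\varphi^4(q)/\varphi^4(q^7) = m_a^2$ with $a = 28^{1/4}$ using Lemmas~\ref{lemma:G}, \ref{lemma:p}, and \ref{lemma:phi-to-p} together with \eqref{e7pi-p}--\eqref{e7pi-part-i}, factor $r_a$ over $\mathbb{Q}(\cos(\pi/7))$ by CAS, and fix the order of the roots through Lemma~\ref{lemma:a,b,c-order}. The only (immaterial) difference is that you propose checking condition \eqref{a,b,c-order-cond1} symbolically modulo $U_6$, whereas the paper settles both ordering conditions by numerical evaluation.
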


\begin{proof} We use the results in Entry~\ref{entry:Ramanujan}\eqref{1+u+v+w}--\eqref{r} with $q = \exp(-\pi/7)$. First, by using Lemma~\ref{lemma:transform}, we rewrite Entry~\ref{entry:Ramanujan}\eqref{1+u+v+w} as in \eqref{e49-1+u+v+w}.

Then, set $a := (28)^{1/4} = \sqrt{2} \cdot 7^{1/4}$, and consider $p_a, m_a,$ and $r_a$ from \eqref{e49-p_a}, \eqref{e49-m_a}, and \eqref{e49-r_a}. By Lemma~\ref{lemma:G}, and by Lemma~\ref{lemma:p} with $n = 1/49$, for Entry~\ref{entry:Ramanujan}\eqref{def:p}, we have $p = 2\sqrt{2}G_{49}/G_1^7$. Thus, comparing \eqref{e7pi-p} with \eqref{e49-p_a}, we see that $p = p_a = (a^2/2) + a + 1$. For Lemma~\ref{entry:Ramanujan}\eqref{phi-to-p}, by using Lemmas~\ref{lemma:phi-to-p} and \ref{lemma:transform}, we arrive at
\begin{equation*}
\frac{\varphi^4(q)}{\varphi^4(q^7)} = \frac{\varphi^4(e^{-\pi/7})}{\varphi^4(e^{-\pi})} = 49\frac{\varphi^4(e^{-7\pi})}{\varphi^4(e^{-\pi})} = m_a^2 = \frac{1}{2}(a^3 + 4a^2 + 10a + 14),
\end{equation*}
where the last equation is obtained by the comparison of \eqref{e7pi-m_a^2} and \eqref{e49-m_a}. Thus, by comparing Entry~\ref{entry:Ramanujan}\eqref{r} with \eqref{e49-r_a}, we find that $r(\xi) = r_a(\xi)$, where we remind readers that
\begin{equation*}
r_a(\xi) = \xi^3 - (a^3 + a^2 + 4a + 6)\xi^2 + \frac{1}{8}(a^2 + 2a + 2)^2(a^2 + 2a + 10)\xi - \frac{1}{16}(a^2 + 2a + 2)^4.
\end{equation*}

Now, because of Lemma~\ref{lemma:roots}, following \eqref{e7sqrt7-factor}, we factor $r_a$ over $\mathbb{Q}(\cos(\pi/7), a)$ as
\begin{align*}
r_a(\xi) = (\xi - \alpha)(\xi - \beta)(\xi - \gamma),
\end{align*}
where
\begin{align*}
\alpha &= \frac{2}{a^2}\bigg\{a^3 + a^2 + 4a + 2 + (2a - a^3 + 12)\cos\bigg(\frac{\pi}{7}\bigg) + (a^3 - 2a - 4)\cos\bigg(\frac{2\pi}{7}\bigg)\bigg\},\\
\beta &= \frac{2}{a^2}\bigg\{\frac{a^3}{2} + a^2 + 5a + 8 - 8\cos\bigg(\frac{\pi}{7}\bigg) + (2a - a^3 + 12)\cos\bigg(\frac{2\pi}{7}\bigg)\bigg\},\\
\gamma &=\frac{2}{a^2}\bigg\{\frac{a^3}{2} + a^2 + 5a + 4 + (a^3 - 2a - 4)\cos\bigg(\frac{\pi}{7}\bigg) - 8\cos\bigg(\frac{2\pi}{7}\bigg)\bigg\}.
\end{align*}

Numerical evaluations exclude all possible orders of $\alpha,\beta,$ and $\gamma$, which do not meet the conditions in Lemma~\ref{lemma:a,b,c-order}\eqref{a,b,c-order-cond1},\eqref{a,b,c-order-cond2}. Thus, we find that $(\alpha, \beta, \gamma)$ is the correct order of the roots, and by Entry~\ref{entry:Ramanujan}\eqref{u,v,w}, the proof is complete.
\end{proof}

From Theorem~\ref{thm:e49}, similarly as we have seen in Theorem~\ref{thm:G343}, the value of $G_{2401}$ can be determined. As from Theorem~\ref{thm:e7} in Theorem~\ref{thm:e49}, by using Theorems~\ref{thm:e7pisqrt3}, \ref{thm:e21}, and~\ref{thm:e35}, analogous results can be obtained for $\varphi(e^{-49\pi\sqrt{3}}), \varphi(e^{-147\pi}),$ and $\varphi(e^{-245\pi})$, respectively. These values can be expressed by using the solutions of the corresponding cubic polynomials, but their structure seems much more complicated.

Based on a remark at the end of Section 12 of Chapter 20 of Ramanujan's second notebook \cite[p.~247]{RamanujanEarlierII}, \cite[p.~400]{BerndtIII}, we believe that the septic identity in Entry~\ref{entry:Ramanujan} is a special case of a much more general result. We will continue our investigation in this direction with the description of the analogous cubic and quintic identities.

\subsubsection*{Acknowledgments} I started to work on this paper as an independent scholar in Győr, Hungary, and finalized it at University of Tromsø -- The Arctic University of Norway, in Tromsø, Norway.\linebreak The comments of Professors Bruce C. Berndt and Trygve Johnsen are highly appreciated.

\medskip

\medskip

\end{document}